\newcommand{\dd}{\mathrm{d}}
\newcommand{\iot}{\int_{0}^{t}}
\newcommand{\E}{\mathbb{E}}
\newcommand{\PP}{\mathbb{P}}
\newcommand{\R}{\mathbb{R}}
\newcommand{\al}{\alpha}
\newcommand{\ep}{\varepsilon}
\newcommand{\ga}{\gamma}
\newcommand{\ka}{\kappa}
\newcommand{\la}{\lambda}
\newcommand{\lc}{\left[}
\newcommand{\rc}{\right]}
\newtheorem{theorem}{Theorem}[section]
\newtheorem{corollary}[theorem]{Corollary}
\newtheorem{hypothesis}[theorem]{Hypothesis}
\newtheorem{proposition}[theorem]{Proposition}
\theoremstyle{remark}
\newtheorem{remark}[theorem]{Remark}
\crefname{corollary}{Corollary}{Corollaries}
\crefname{definition}{Definition}{Definitions}
\crefname{figure}{Figure}{Figures}
\crefname{hypothesis}{Hypothesis}{Hypotheses}
\crefname{lemma}{Lemma}{Lemmas}
\crefname{notation}{Notation}{Notations}
\crefname{proposition}{Proposition}{Propositions}
\crefname{remark}{Remark}{Remarks}
\crefname{section}{Section}{Sections}
\crefname{theorem}{Theorem}{Theorems}
\title[Queuing model in random environment]{Infinite server queues in a \\ random 
fast oscillatory environment}
\author[Y. Liu \and H. Honnappa \and S. Tindel \and N. K. Yip]
{Yiran Liu \and Harsha Honnappa \and Samy Tindel \and  Nung Kwan Yip}
\address{Harsha Honnappa: School of Industrial Engineering, Purdue
University, 315 N. Grant Street, W. Lafayette, IN 47907, USA.}
 \address{Yiran Liu, Samy Tindel, Nung Kwan Yip: Department of Mathematics,
Purdue University,
150 N. University Street,
W. Lafayette, IN 47907,
USA.}
\keywords{Infinite server queue, random environment, homogenization.}
\email{[honnappa, liu387, stindel, yipn]@purdue.edu}
\thanks{S. Tindel is supported by the NSF grant
  DMS-1952966. H. Honnappa is supported by the NSF grant CMMI-1636069.}
 \subjclass[2010]{60K25, 60G55, 60F15, 90B22}
\begin{document}
\maketitle

\begin{abstract}
In this paper, we consider a $G_t/G_t/\infty$ infinite server queueing model in a random environment. More specifically, the arrival rate in our server is modeled as a highly fluctuating stochastic process, which arguably takes into account some small time scale variations often observed in practice. We show a homogenization property for this system, which yields an approximation by a $M_t/G_t/\infty$ queue with modified parameters.
Our limiting results include the description of the number of active servers, 
the total accumulated input and the solution of the storage equation. 
Hence in the fast oscillatory context under consideration, we show how the queuing system in a random environment can be approximated by a more classical Markovian system.
\end{abstract}

\section{Introduction}
Nonstationary models have been extensively studied in the literature on queues, particularly in the Markovian setting. A typical assumption in this setting is that the arrival and service intensities are deterministic time-varying functions. In Markovian settings, it is also natural to assume that the intensity functions are smooth~\cite{whitt1}. However, in practice, queueing systems are often subject to ``environmental'' noise: for instance, while arrival intensities to call centers and hospitals display time-of-day (or ``diurnal'') effects, the intensity functions also vary based on the day-of-week and seasonal effects. In other queueing systems, particularly those with high intensity arrivals such as computer networks or cloud service systems, there is also intra-day and intra-hour stochastic variation in the intensity process. The performance of these queueing systems is therefore affected by both the smaller time-scale stochastic variations, as well as (relatively) longer time-scale time-of-day effects.

Our objective in this paper is to understand the interplay between short time-scale stochastic fluctuations and long time-scale variations in the model intensities, and the impact these effects have on the computation of system performance metrics. To this end, we study a $G_t/G_t/\infty$ infinite server queueing model imbedded in a random environment. While infinite server queues are an approximation in the real world, they are a useful vehicle to address the questions of interest to us. We assume a doubly stochastic Poisson process (DSPP) traffic model, so that conditional on the stochastic intensity the cumulative number of arrivals in a fixed time interval is Poisson distributed. 
We assume that the stochastic intensity is modeled as $\mu(s) = \Psi(s,Z_s)$, 
where $Z_s$ is an ergodic stochastic process (as we will see later on, a typical example of such process is an Ornstein-Uhlenbeck type process). 
We will develop much of our theory under the special case of a ``separable'' stochastic intensity function, denoted by $\mu^{\ep}$ and defined by
\begin{equation}\label{eq:intensityfunc-mu}
\mu^{\ep} (\dd s) = \la (s) \psi (Z_{s/\ep}) \dd s , \quad \ep > 0, \, s > 0
\end{equation}
where a deterministic function of time $\la (s)$ (modeling time-of-day effects) is multiplied by some positive function of a stochastic process $Z$ (modeling fluctuations). Note the time scale $\ep^{-1} t$ associated with $Z$. The constant $\ep$ in this paper is intended to be a small parameter reflecting the fast oscillatory nature of the fluctuations. Coming to the service model, we consider a general setting where the parameters of the service time distribution functions are assumed to vary temporally with the long time-scale variations in the traffic intensity. An example is given by Pareto service times, with temporally-varying scale coefficients that depend on the arrival epoch.

Performance analysis of non-Markovian queueing models is in general rather difficult. Consequently, we focus on developing stochastic process approximations as the parameter $\ep \to 0$. Our main result in \cref{thm:limit-N-ep-in-D_T} shows that in the limit the $G_t/G_t/\infty$ queue is closely approximated by a $M_t/G_t/\infty$ queue; i.e., an infinite server queue where traffic is modeled by a Poisson process with deterministic time-varying intensity function. Such a limiting procedure is often called `homogenization,' in the sense that the fast oscillating process is averaged out to produce an effective description which is usually much easier to analyze. More precisely, let $N^{\ep}$ be the quenched stochastic process representing the state of the $G_t/G_t/\infty$ queue. Then $N^{\ep}$ converges weakly to a Poisson point process $N$ that is the state of an $M_t/G_t/\infty$ queue. We write this limit as
\begin{equation}\label{eq:limit-N-epsilon-intro}
\{N^{\ep} (t) : t \in [0, T]\} \xrightarrow{(\dd)} \{N(t) : t \in [0, T]\} , ~\mathbb{P}_Z-\text{a.s.}
\end{equation}
where $T>0$ is an arbitrary time horizon and $\mathbb{P}_Z$ denotes the quenched probability for a fixed environment $Z$. In order to better specify a notion of \textit{quenched} in this context, note that there are inherently two stochasticities in our model: one is the random fluctuation for the arrival rate process $\mu^{\ep}$ defined by \eqref{eq:intensityfunc-mu}, and the other is the actual arrival process $N^{\ep}$ given the rate. The notion of ``quenched'' refers to the probability space upon fixing or given one particular random fluctuation in $\mu^{\ep}$. 

The proof of our main result is crucially dependent on the assumption that the short-term stochastic fluctuation model mixes rapidly enough (see \cref{hyp:ergodicity} for a more precise statement). Consequently, the stochastic fluctuations can reach a steady state within the relatively longer time-scale of the time-of-day effects, thereby ``averaging'' out the short time-scale fluctuations. The proof relies on first showing that the mean measure of the queue state satisfies a strong law type limit. Namely it can be seen that the mean measure of the queue state is an additive functional of the stochastic fluctuations, and when the latter reach steady state fast enough, the mean measure converges to a deterministic limit. This type of result is well known for general Markov processes~\cite{KontoyannisMeyn}. Our homogenization limit follows by leveraging the convergence of mean measure to show that the finite dimensional distributions of the Poisson random measure corresponding to the (quenched) state $N^{\ep}$ of the $G_t/G_t/\infty$ queue converge to those of the state $N$ of an $M_t/G_t/\infty$ queue. A further tightness condition leads to the convergence of the whole process. As the reader can see, the main technical novelty in our paper consists in combining those homogenization results with some more standard considerations about limits for queues.

Our main result \cref{thm:limit-N-ep-in-D_T} yields two crucial insights into the performance analysis of queueing models imbedded in random environments. First, it shows that ergodic properties of the underlying stochastic fluctuations play a critical role in separating the fast stochastic fluctuations from the longer time-scale temporal variations. Second, it shows that under our homogenization limit, it is possible to safely ignore the stochastic fluctuations and model the system using an $M_t/G_t/\infty$ queueing model. The latter model has been extensively studied~\cite{EMW1,EMW2,MW1} and there is a substantial literature available on its properties, particularly with stationary service. From a practical perspective, the $M_t/G/\infty$ queue is also significantly easier to use in simulation studies.

The aforementioned homogenization phenomenon put forward in \cref{thm:limit-N-ep-in-D_T} is similar to the ``rapid fluctuation'' analysis in~\cite{ZHG1}. In that work, the weak convergence of a general point process (for example, a DSPP) to a constant rate Poisson process (under the assumption that the compensator of the point process satisfies a strong law) was used to approximate the state distribution of a $G_t/G/\infty$ queue by that of an $M_t/G/\infty$ queue. The analysis in~\cite{ZHG1} crucially used Taylor expansions of the state probability distribution at a fixed time $t$ in terms of the scaling parameter. The approach in the current paper is completely different. Indeed, we mostly exploit the Poisson random measure representation of the state process, and then establish the process-level stochastic approximation limit for this object. Our paper is also closely related to~\cite{RR}, where an infinite server queue with ``extremely'' heavy-tailed Pareto service times is studied in a time homogeneous setting in order to explain network self-similarity effects. The relatively simple homogeneous setting allowed the authors to establish not only a functional strong law of large numbers (FSLLN), but also a functional central limit theorem (FCLT). Our results in this paper substantially generalize the FSLLN result to a queueing model imbedded in a random environment. 
However, the analysis to establish the corresponding FCLT in our setting is significantly complicated; see our conclusion section.

Our paper can be related to multiple threads of research on nonstationary queueing models and queues embedded in random environments. First, there is a significant body of work developing both uniform acceleration (\cite{MMR,honnappa,MM,SS}) and many-server heavy-traffic limit theorems (\cite{lu1,lu2,CH1}) to time-varying queues. In much of this literature, the limit processes are shown to be (reflected) diffusion processes, where the time-of-day effects manifest themselves as the drift function of the diffusion process. Note that all of this work assumes that the nonstationarity manifests as a deterministic temporal variation. There is also a growing body of work developing asymptotic expansions (\cite{ZHG1,ZHG2,MW2,pender1,pender2,KYZ1,KYZ2}) of performance metrics. It is well known, however, that traffic arriving at call centers and hospitals displays significant over-dispersion relative to a Poisson process with deterministic intensity~\cite{KW1}, implying that a DSPP is an appropriate model of the traffic in these systems. There is a significant literature on queues in random environments. Much of this literature assumes that either the traffic and/or service processes are Markov modulated, where the underlying stochastic environment process is a finite state Markov chain; the vast majority of the related literature focuses on characterizing stationary behavior, but~\cite{CMRW,pender1} exhibit a couple of examples where asymptotic limit theorems and expansions can be established. 

Most relevant to our current setting is the literature on infinite server queues in random environments~\cite{OP,Rol1,Rol2,HvLM,HM,pender2}. While we cannot do a full review of this literature here, we point out, in particular,~\cite{HvLM} where the effect of over-dispersed traffic on the performance of an infinite server queue is studied. Paralleling our findings, this paper shows that a sufficiently rapidly fluctuating environment (relative to a slowly changing arrival intensity) will, in an appropriate asymptotic regime, ensure that the infinite server system behaves like a ``standard'' infinite server queue in steady-state. On the other hand, in~\cite{HvLM} the traffic intensity does not have an explicit time-of-day component and, for analytical reasons, 
the random environment is formulated in a somewhat ``stylized'' fashion.
Our statements complement these results, and more significantly, 
show that the standard infinite server queue behavior is preserved even with explicit time-of-day effects in the traffic and service processes. In addition, let us observe again that our result is obtained in the so-called quenched regime (as opposed to the annealed regime of~\cite{HvLM}). Otherwise stated, the main limit result~\eqref{eq:limit-N-epsilon-intro} is valid for almost any realization of the environment $Z$. This is usually believed to correspond to real world situations, where only a single realization of $Z$ is observed.

The rest of this paper is organized as follows. \cref{sect:notation-model} introduces
the notation that will be used throughout this paper and 
constructs the random arrival model of interest with appropriate hypotheses. 
\cref{sect:law-N-ep} gives preliminary limiting results concerning the mean 
$m^{\ep}$ of the Poisson random variable $N^{\ep}$. We present the main results for the homogenized process $N^{\ep}$ in \cref{sect:homogenized-process}, where we prove the  convergence in law of $N^{\ep}$ as a process. Finally, we end in Section~\ref{sect:generalization} with a recap of the results and directions for future research.

\section{Basic Notation and Poisson-based Model}\label{sect:notation-model}

We model the $G_t/G_t/\infty$ queue using a Poisson point process imbedded in a random environment. Let $(\Omega, \mathcal F, \PP)$ be a probability space with respect to which we define all random elements to follow.
The expectation with respect to $\PP$ is denoted as $\mathbb E(\cdot)$. 

\subsection{Model for the random environment.}\label{subsect:model-random-environment}

As mentioned in the introduction, our main contribution is to incorporate some fast 
oscillations modeled by an ergodic process $Z$ into the arrival rate of our queueing 
system. In this section we proceed to describe such a process.

\begin{hypothesis}\label{hyp:ergodicity}
Let $Z = \{Z_t \, ; \, t \geq 0\}$ be a $\R^d$-valued stochastic process defined on the probability space $(\Omega, \mathcal{F}, \PP)$. The initial distribution $\mathcal{L} (Z_0)$ of $Z$ is denoted by 
$\rho_0$, and we suppose that $Z$ possesses a unique invariant probability measure $\pi$. We also assume that $Z$ is strongly ergodic with rate $\ka > 0$ in the following sense: for any regular enough function $\psi : \R^d \to \R$, there exists a finite random variable $C = C_{\psi} (\omega) > 0$ such that $\PP$-almost surely we have
\begin{equation}\label{eq:hyp-ergodic-on-Z}
\left|\frac{1}{t} \iot \psi (Z_u) \, \dd u - \bar{\psi} \right|
\leq \frac{C}{(1+t)^{\kappa}} \, \, , \quad \text{with} \quad \bar{\psi} = \int_{\R^d} \psi(z)\,\pi(\dd z) \,.
\end{equation}
\end{hypothesis}

\begin{remark} {Note that the above hypothesis gives a 
convergence rate in the law of large number type statement. The constant
$C$ in general can depend on the realization of the random process $Z_\cdot$.}
There exists an abundant literature about results of the form \eqref{eq:hyp-ergodic-on-Z} for Markov chains. A general framework is developed in ~\cite{KLO}, 
which yields the following particular case: Set $Z_u = X_{[u]}$, where $[u]$ denotes the integer part of $u$ and $\{X_j \,;\, j \geq 0\}$ is a reversible ergodic Markov chain on a countable state space $E$. Let $\psi: E \to \R$ be such that $\sigma^2 (\psi) < \infty$, where
\begin{equation*}
\sigma^2 (\psi) = \lim_{N \to \infty} \, \frac{1}{N} \, \mathrm{Var} \left(\sum_{j=0}^{N-1} \psi (X_j) \right) .
\end{equation*}
{Then under some additional moment condition, it can be 
established that for any $0 < \ka < \frac{1}{2}$}, the following
holds almost surely:
\begin{equation}\label{eq:as-cvgce-olla}
\lim_{t \to \infty} \, (1+t)^{\ka} \left[ \frac{1}{t} \iot \psi (Z_u) \dd u - \bar{\psi}\right] = 0 \,,
\end{equation}
which immediately implies relation \eqref{eq:hyp-ergodic-on-Z}. 
Other examples of Markov processes (more specifically Harris chains) satisfying \eqref{eq:hyp-ergodic-on-Z} are provided in ~\cite{ChenLIL,LL}, based on the law of the iterated logarithm. Notice that \cite{LL} handles directly some continuous time Markov processes.
\end{remark}

\begin{remark}
\cref{hyp:ergodicity} can also be fulfilled in non-Markovian contexts. Indeed, consider an $\R^d$-valued fractional Brownian motion $B$ with Hurst parameter $H \in (0,1)$. Let $b: \R^d \to \R^d$ be a function such that the following inward property is satisfied for a constant $a>0$:
\begin{equation*}
\left\langle b(x) - b(y), x-y \right\rangle_{\R^d} \, \leq - a \left\Vert x-y \right\Vert^2 .
\end{equation*}
We consider the process $Z$ which solves to the following stochastic differential equation:
\begin{equation*}
Z_t = a + \iot b(Z_s) \, \dd s + B_t \, ,
\end{equation*}
where $a \in \R^d$. Then combining ~\cite{Sau} and ~\cite{GKN}, one can prove that $Z$ satisfies \cref{hyp:ergodicity} (details are omitted since this result is unrelated to the main message of the current paper). Observe that the case of a Brownian motion $B$ with $b=-a \, \textrm{Id}\, _{\R^d}$ corresponds to the classical Ornstein-Uhlenbeck case, for which \cref{hyp:ergodicity} thus holds. 
\end{remark}

\subsection{Model for the system state.}

Having specified our random environment, we now describe our model for the system state. It is determined by the arrival and service times that we proceed to define below.

Our random environment will enter into the intensity of the arrival process. Namely, the arrival process is conceived as follows.

\begin{hypothesis}\label{hyp:arr-times-Gamma}
Given $\ep > 0$, the sequence of arrival times $\{\Gamma_k^{\ep} \, ; k \geq 1 \}$ is distributed as a Poisson process with nonhomogeneous intensity $\{ \lambda(s) \psi(Z_{s/ \ep}) \, ; s \geq 0\}$, where $Z$ fulfills \cref{hyp:ergodicity} and $\psi : \R^d \to \R_+$ is a positive Lipschitz function. In addition, the functions $\la : \mathbb{R}_+ \to \mathbb{R}_+$ and $\psi : \R^d \to \mathbb{R}_+ $ are assumed to be continuous and bounded by a constant, say, $K > 0$.
\end{hypothesis}

We now turn our attention to the service process. It is given as a family $\{ L_k^{\ep} \, ; k \geq 1\}$ of random variables which are independent conditional on the arrival process $\{\Gamma_k^{\ep} \, ; k \geq 1 \}$.
Their law satisfies 
$\mathcal{L} (L_k^{\ep} | \Gamma_k^{\ep}) \equiv \nu (\Gamma_k^{\ep}, \dd r)$
for all $k$. Furthermore, we assume that the complementary cumulative distribution function
of the service times
\begin{equation}\label{eq:def-bar-F}
\bar F_s (r) := \int_r^\infty \nu(s,\dd \tau),
\quad r \geq 0
\end{equation}
satisfies the following tail and increment bounds.

\begin{hypothesis}\label{hyp:F-bar-increments}
There exists $\al > 0$ and a constant $c > 0$ such that
	\begin{eqnarray}\label{def:F-bar-generalization}
		\bar F_s (r) & \leq & c \left( \frac{1}{r^\alpha} \wedge 1 \right) \, , \quad \text{for all} \quad r, s > 0 \\
\text{and}\,\,\,
		\left|\bar F_s (r) - \bar F_t (r)\right| & \leq & c (1+r)^{-1-\al} (t-s) \, , \quad \text{for all} \quad 0 < s < t \quad \text{and} \quad r >0 . \notag
	\end{eqnarray}

\end{hypothesis}

\begin{remark}
\cref{hyp:F-bar-increments} covers a large variety of service time distributions, 
including both ``light tailed'' models such as the Gamma distribution, 
as well as ``heavy tailed'' models such as the Pareto-like
distributions. The latter is of particular interest, as attested by Resnick and Rootz{\'e}n \cite{RR}, for example.
Therefore, a typical example the reader might have in mind is given by
\begin{equation}\label{eq:example-for-F-bar}
\bar{F}_s (r)
= k_s (r) \, \mathbbm{1}_{[0,d_s]} (r) + \frac{c_s}{r^{\al}} \, \mathbbm{1}_{(d_s, \infty)} (r) \, ,
\end{equation}
where for each $s>0$, $k_s$ is a smooth function, where $d_s$ and $c_s$ are positive constants,  $\al >0$, and where proper assumptions are made so that $\bar{F}_s (r)$ is a continuous function.
Notice that a positive random variable $X$ whose distribution function $F$ satisfies~\eqref{eq:example-for-F-bar} is such that $E [X^{\beta}] < \infty$ for $\beta < \al$ and $E [X^{\beta}] = \infty$ for $\beta \geq \al$. Hence relation \eqref{eq:example-for-F-bar} allows a good calibration of the boundedness of moments for the service time.

\end{remark}

\begin{remark}
The function $s \mapsto c_s$ in \eqref{eq:example-for-F-bar} is thought of as a smooth and bounded slowly-varying function which modulates the service according to the arrival time. A specific example is given by the following oscillating function:
\begin{equation}\label{eq:def-c_s}
c_s = 1+ \beta \sin (ks) , \hspace{5pt} \text{with} \hspace{5pt} \beta \in (0,1) \hspace{5pt} \text{and} \hspace{5pt} k \geq 0.
\end{equation}
\end{remark}

With the arrival and service times in hand, our queueing system is classically described by a point process. Namely for $\ep > 0$, we consider the following counting measure on $\R_+ \times \R_+$:
\begin{equation}\label{eq:M-sum}
M^{\ep} := \sum_{k=1}^{\infty} \delta_{(\Gamma_k^{\ep}, L_k^{\ep})} \,.
\end{equation}
Then our main variable of interest is the number of active jobs in 
the infinite server queue at time $t$ which can be expressed as:
\begin{equation}\label{eq:N-t}
N^{\ep} (t) = \sum_{k=1}^{\infty} \mathbbm{1}_{\{\Gamma_k^{\ep} < t< \Gamma_k^{\ep} + L_k^{\ep} \}} = M^{\ep} \{(x,y) \in \mathbb{R}_+ \times \mathbb{R}_+, x< t< x+y\} .
\end{equation}
Our main aim in this paper is to derive a limit theorem for the process $N^{\ep} = \{N^{\ep} (t) ; t \in [0,T]\}$ as $\ep \to 0$, for an arbitrary time horizon $T>0$.

\begin{remark}\label{notation:quenched-annealed}
As the reader might have seen, there are two levels of randomness in our model. The first level corresponds to the random environment $Z$ described in \cref{subsect:model-random-environment}, while the second source of randomness is embodied in the Poisson point process $N^{\ep}$ 
given by \eqref{eq:N-t}. As in most of the literature on random environments, we shall play with the notion of annealed and quenched probabilities. The annealed probability represents the global probability with respect to all the randomness involved in our system and is denoted by $\PP$. The quenched probability corresponds to conditioning on the process $Z$, and is usually thought of as the physically observed probability (as already mentioned in the Introduction). This probability will be denoted by $\PP_Z$, with a corresponding expectation $\E_Z$. The relation between quenched and annealed probabilities is summarized as
\begin{equation}
\PP_Z (\cdot) = \PP (\cdot | Z) \,, \quad \text{and} \quad \E_Z [\cdot] = \E [\cdot | Z] \,.
\end{equation}
\end{remark}


\section{Analysis of the mean measure}\label{sect:law-N-ep}

In the previous section we have defined the Poisson point process $N^{\ep}$ describing our queueing system in a random environment. We now turn to the analysis of the mean measure $m^{\ep}$ of $N^{\ep}$, with a special focus on the asymptotics of $m^{\ep}$ as $\ep \to 0$.

\subsection{Mean measure of $N^\ep$}

This section is devoted to a full description of the law of $N^{\ep}$. The main result in this direction is summarized in the following proposition giving the conditional law of $N^{\ep} (t)$.

\begin{proposition}\label{prop:m-t-of-N-t}
Let $M^\ep$ and $\{N^{\ep} (t): t \geq 0\}$ be defined
by \eqref{eq:M-sum} and \eqref{eq:N-t}, respectively. Then under the quenched probability 
$\mathbb{P}_Z$,
$M^\ep$ is a Poisson random measure with mean measure given by
\begin{equation}\label{eq:def-mu-la-psi}
\tilde{\nu}^{\ep} (\dd x, \dd y) = \nu (x, \dd y) \mu^{\ep} (\dd x) \,,
\quad \text{with} \quad
\mu^{\ep} (\dd s) = \la (s) \psi (Z_{s/\ep}) \dd s \,,
\end{equation}
where we recall that $\nu$ is introduced in \eqref{eq:def-bar-F}. Furthermore,  we have that for any $t > 0$, 
$N^{\ep} (t)$ is a Poisson random variable with parameter
\begin{equation}\label{eq:m-t-def}
m^{\ep} (t) = \int_{\{(x,y): x<t< x+y\}} \nu (x, \dd y) \mu^{\ep} (\dd x) = \iot \int_{t-x}^{\infty} \nu (x, \dd y) \mu^{\ep} (\dd x).
\end{equation}
\end{proposition}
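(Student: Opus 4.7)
The plan is to reduce the statement to the classical marking theorem for Poisson point processes (sometimes called Poisson colouring). Throughout, I would fix a realization of the environment $Z$ and work under the quenched probability $\PP_Z$, so that the intensity $s \mapsto \la(s)\psi(Z_{s/\ep})$ is, from the point of view of the proof, a deterministic locally integrable function (its local integrability is immediate from the boundedness assumption $\la,\psi \leq K$ in \cref{hyp:arr-times-Gamma}).

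Step one: by \cref{hyp:arr-times-Gamma}, the point process $\sum_k \delta_{\Gamma_k^{\ep}}$ on $\R_+$ is, under $\PP_Z$, a Poisson random measure with mean measure $\mu^{\ep}(\dd s) = \la(s)\psi(Z_{s/\ep})\dd s$. Step two: by construction, the service times $\{L_k^{\ep}\}$ are conditionally independent given $\{\Gamma_k^{\ep}\}$, with $\mathcal{L}(L_k^{\ep}\mid \Gamma_k^{\ep}) = \nu(\Gamma_k^{\ep},\dd r)$. This is exactly the setting of an independent marking, where each point $x$ of the Poisson process on $\R_+$ is marked by an element of $\R_+$ drawn from the probability kernel $\nu(x,\dd r)$. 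The marking theorem then yields that the enriched point process
\begin{equation*}
M^{\ep} = \sum_{k=1}^{\infty} \delta_{(\Gamma_k^{\ep}, L_k^{\ep})}
\end{equation*}
is a Poisson random measure on $\R_+ \times \R_+$ with mean measure $\tilde{\nu}^{\ep}(\dd x, \dd y) = \nu(x,\dd y)\,\mu^{\ep}(\dd x)$, which is the first assertion.

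For the second assertion, I would simply observe from \eqref{eq:N-t} that $N^{\ep}(t) = M^{\ep}(A_t)$, where $A_t = \{(x,y) \in \R_+\times\R_+ : x<t<x+y\}$. Since $M^{\ep}$ is a Poisson random measure and $A_t$ is Borel, $N^{\ep}(t)$ is Poisson distributed with parameter $\tilde{\nu}^{\ep}(A_t)$. Rewriting $A_t$ as $\{(x,y): 0<x<t,\, y>t-x\}$ and applying Fubini immediately gives
\begin{equation*}
\tilde{\nu}^{\ep}(A_t) = \iot \int_{t-x}^{\infty} \nu(x,\dd y)\,\mu^{\ep}(\dd x) = m^{\ep}(t),
\end{equation*}
which is exactly \eqref{eq:m-t-def}.

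There is no substantial analytic difficulty in this proof; the essence is simply to recognize the model as a marked Poisson process and invoke a standard theorem. The only careful points are (i) making sure the construction under $\PP_Z$ is legitimate, which is true because conditioning on $Z$ leaves us with an ordinary inhomogeneous Poisson process plus an independent marking kernel, and (ii) checking that $\tilde{\nu}^{\ep}$ is $\sigma$-finite on $\R_+\times \R_+$, which follows from $\la,\psi\leq K$ together with the tail bound $\bar F_s(r)\leq c(r^{-\al}\wedge 1)$ from \cref{hyp:F-bar-increments}, so that $\tilde{\nu}^{\ep}([0,T]\times \R_+)\leq K^2\iott \bar F_s(0^+)\,\dd s$ is finite on bounded time windows. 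Once these verifications are in place, the marking theorem and the definition of a Poisson random measure deliver the two claims directly.
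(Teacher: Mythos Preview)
Your proposal is correct and follows essentially the same route as the paper: both arguments invoke the marking theorem for Poisson processes (the paper cites \cite[Proposition~2.2]{Re}) to identify $M^{\ep}$ as a Poisson random measure with mean measure $\tilde{\nu}^{\ep}$, and then read off the law of $N^{\ep}(t)=M^{\ep}(A_t)$. The only cosmetic difference is that the paper passes through the Laplace functional identity $\E_Z[e^{-M^{\ep}f}]=e^{-\tilde{\nu}^{\ep}(1-e^{-f})}$ with $f=\mathbbm{1}_{A_t}$, whereas you appeal directly to the defining property that a Poisson random measure evaluated on a Borel set is Poisson with the corresponding mean; these are equivalent.
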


\begin{proof}
First, we will show that $M^\ep$ is a Poisson random measure. To this aim, recall \cref{notation:quenched-annealed} for the definition of the quenched probability $\PP_Z$. Then under $\PP_Z$ and according to \eqref{eq:M-sum}, the point process $M^{\ep}$ is of the form $\sum_{k \geq 1} \delta_{(\Gamma_k^{\ep} \,, \, L_k^{\ep})}$, where $\{\Gamma_k^{\ep}\, ; \, k \geq 1\}$ is a Poisson process (see \cref{hyp:arr-times-Gamma}). Thanks to \cite[Proposition 2.2]{Re},  we get that $M^{\ep}$ is a Poisson point process under $\PP_Z$, whose mean measure $\tilde{\nu}^{\ep}$ can be decomposed as
\begin{equation*}
\tilde{\nu}^{\ep} (\dd x, \dd y) = \nu (x, \dd y) \mu^{\ep} (\dd x) ,
\end{equation*}
where $\nu$ is the measure featuring in \eqref{eq:def-bar-F} and $\mu^{\ep}$ is defined by \eqref{eq:def-mu-la-psi}.

Therefore, according to \cite[Chapter VI Theorem 2.9]{Cin}, the quenched Laplace transform of $M^{\ep}$ is given for all measurable and positive functions $f: \mathbb{R}_+ \times \mathbb{R}_+ \rightarrow \mathbb{R}$ by
\begin{equation}\label{eq:LT-Mf}
\mathbb{E}_{Z}\lc   e^{-M^{\ep} f} \rc 
= e^{- \tilde{\nu}^{\ep} (1-e^{-f})} .
\end{equation}
where the notation $\tilde{\nu}^{\ep} (1-e^{-f})$ in the above stands for the integral of $(1-e^{-f})$ with respect to the measure $\tilde{\nu}^{\ep} = \nu (x, \dd y) \mu^{\ep} (\dd x)$. Hence, we can rewrite \eqref{eq:LT-Mf} as
\begin{multline}\label{eq:exp-e-Mf}
\mathbb{E}_{Z} \lc 
\exp \Big\{ - \int_{\mathbb{R}_+ \times \mathbb{R}_+} M^{\ep} (\dd x, \dd y) f(x, y) \Big\} \rc \\
= \exp \Big\{- \int_{\mathbb{R}_+ \times \mathbb{R}_+} \nu (x, \dd y) \mu^{\ep} (\dd x) (1-e^{-f(x, y)}) \Big\} .
\end{multline}

\noindent
In addition, according to \eqref{eq:N-t} we have $N^{\ep} (t) = M^{\ep} f$ with $f(x,y) = \mathbbm{1}_{(x<t<x+y)}$. Plugging this expression into \eqref{eq:exp-e-Mf}, 
we immediately get relation \eqref{eq:m-t-def} completing the proof.
\end{proof}

\subsection{Limit for $m^\ep(t)$}
According to relation \eqref{eq:m-t-def} in \cref{prop:m-t-of-N-t} and the fact that $\mu^{\ep} (\dd s) = \la (s) \psi (Z_{s/ \ep}) \dd s$, the (quenched) mean of the random variable $N^{\ep} (t)$ defined in~\eqref{eq:N-t} is given by
\begin{equation}\label{eq:def-m-t-epsilon}
m^{\ep}(t) = \int_{0}^{t} \left(\lambda (s) \psi (Z_{s/\ep}) \int_{t-s}^{\infty} \nu (s, \dd r)\right) \dd s
=
\iot \, \la(s) \, \psi(Z_{s/\ep}) \, \bar{F}_s (t-s) \, \dd s ,
\end{equation}
where $\bar{F}_s$ is the tail function given by \eqref{eq:def-bar-F}. The following theorem gives a full description of the almost sure asymptotic behavior of $m^{\ep} (t)$.

\begin{theorem}\label{Thm:limit-m-ep}
Let us assume the same set-up and notations as in 
\cref{prop:m-t-of-N-t}. Furthermore,
suppose that \cref{hyp:ergodicity,hyp:arr-times-Gamma,hyp:F-bar-increments} are satisfied.
Then under the quenched probability $\PP_Z$, for any $t>0$ we have almost surely
\begin{equation}\label{eq:lim-m-ep}
\lim_{\ep \rightarrow 0} m^{\ep} (t) = \bar{m} (t) \,,
\end{equation}
where $\bar{m} (t)$ is given by
\begin{equation}\label{eq:m-bar}
\bar{m} (t) = \sigma(t) \bar{\psi} \, ,
\end{equation}
and for which the quantities $\sigma (t)$ and $\bar{\psi}$ are respectively defined by
\begin{equation}\label{eq:quantity-sigma-psi-bar}
\sigma(t) = \iot \la (s) \bar{F}_s (t-s) \dd s \,,
\quad \text{and} \quad
\bar\psi = \int_{\R^d} \psi(z)\,\pi(\dd z) \,.
\end{equation}
In relation \eqref{eq:quantity-sigma-psi-bar}, $\pi$ is the invariant
measure of the process $Z$ introduced in \cref{hyp:ergodicity}.
\end{theorem}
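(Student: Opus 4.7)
The strategy is to write the difference as
\[
m^\ep(t) - \bar m(t) = \iot g(s)\bigl[\psi(Z_{s/\ep}) - \bar\psi\bigr]\,\dd s,
\qquad g(s) := \la(s)\bar F_s(t-s),
\]
and thereby reduce the statement to a weighted ergodic-averaging of $\psi(Z_{\cdot/\ep})$ against $g$. From \cref{hyp:arr-times-Gamma,hyp:F-bar-increments}, $g$ is bounded on $[0,t]$ by $K^2$ and is Riemann integrable (monotonicity of $\bar F_s$ in $r$ produces at most countably many jumps, and the Lipschitz dependence in $s$ combined with continuity of $\la$ handles the first argument). The plan is then to approximate $g$ by a step function, apply the ergodic hypothesis on each piece, and take the two resulting limits in the correct order. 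An alternative integration-by-parts approach would be cleaner but would require $g$ to be of bounded variation, which is not clear from our hypotheses.

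Concretely, for $n\geq 1$, partition $[0,t]$ into equal subintervals via $s_i=it/n$ and set $g_n := \sum_{i=0}^{n-1} g(s_i)\mathbbm{1}_{[s_i,s_{i+1})}$. Riemann integrability yields $\|g-g_n\|_{L^1([0,t])}\to 0$ as $n\to\infty$, and since $|\psi|\leq K$ this controls the replacement error uniformly in $\ep$:
\[
\left|\iot(g-g_n)(s)\bigl[\psi(Z_{s/\ep})-\bar\psi\bigr]\,\dd s\right| \leq (K+\bar\psi)\,\|g-g_n\|_{L^1}.
\]

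It remains to control the step-function piece $\sum_i g(s_i)\int_{s_i}^{s_{i+1}}\bigl[\psi(Z_{s/\ep})-\bar\psi\bigr]\,\dd s$. The change of variables $u=s/\ep$ expresses each subinterval integral as a telescoping difference
\[
\int_{s_i}^{s_{i+1}}\!\bigl[\psi(Z_{s/\ep})-\bar\psi\bigr]\,\dd s = \ep\!\left(\int_0^{s_{i+1}/\ep}\!\psi(Z_u)\,\dd u - \tfrac{s_{i+1}}{\ep}\bar\psi\right) - \ep\!\left(\int_0^{s_i/\ep}\!\psi(Z_u)\,\dd u - \tfrac{s_i}{\ep}\bar\psi\right).
\]
By \cref{hyp:ergodicity} each parenthesized expression is bounded in absolute value by $C(s_j/\ep)(1+s_j/\ep)^{-\ka}$, so after multiplication by $\ep$ it is at most $C s_j^{1-\ka}\ep^\ka$ when $s_j>0$ (the $s_0=0$ contribution vanishes trivially). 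Summing over the finitely many subintervals and using boundedness of $g$, the full step-function contribution is of order $\ep^\ka$ with a constant depending on $n$, and therefore tends to $0$ as $\ep\to 0$ for each fixed $n$.

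The proof is then completed by combining the two estimates via the triangle inequality: for any $\eta>0$, first choose $n$ large enough that the replacement error is at most $\eta/2$, and then choose $\ep$ small enough that the step-function error is at most $\eta/2$. This establishes $m^\ep(t)\to\bar m(t)$ $\PP_Z$-almost surely. No serious analytical obstacle is expected; the quantitative ergodic rate in \eqref{eq:hyp-ergodic-on-Z} does the main work, and the principal care required is the correct coupling of the two parameters $n$ and $\ep$ together with the verification of Riemann integrability of $g$ despite possible discontinuities of $\bar F_s$ in its second variable.
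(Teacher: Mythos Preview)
Your proof is correct and takes a genuinely different route from the paper's. The paper argues via integration by parts: writing $h(s,t)=\la(s)\bar F_s(t-s)$, it computes
\[
m^\ep(t)=h(t,t)\int_0^t\psi(Z_{r/\ep})\,\dd r-\int_0^t\Bigl(\tfrac{\dd}{\dd s}h(s,t)\Bigr)\int_0^s\psi(Z_{r/\ep})\,\dd r\,\dd s,
\]
and then applies the ergodic bound \eqref{eq:hyp-ergodic-on-Z} inside the second integral together with dominated convergence. This is slick when it applies, but it silently uses differentiability of $s\mapsto h(s,t)$, which is not actually guaranteed by \cref{hyp:arr-times-Gamma,hyp:F-bar-increments} as stated (only continuity of $\la$ and a Lipschitz bound on $s\mapsto\bar F_s(r)$ are assumed). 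Your step-function/Riemann-sum argument sidesteps this regularity issue entirely; in fact it is essentially the method the paper itself adopts later, in Section~\ref{sec:ext}, when it treats the non-separable intensity $\Psi(s,Z_{s/\ep})$. So your approach is more robust, at the cost of the extra two-parameter ($n,\ep$) limiting argument.

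One small point to tighten: your justification of Riemann integrability of $g$ is not quite complete. The discontinuity set of $s\mapsto\bar F_s(t-s)$ is $\{s:\nu(s,\cdot)\text{ has an atom at }t-s\}$, and the countability of jumps of $\bar F_{s_0}(\cdot)$ for each \emph{fixed} $s_0$ does not by itself force this diagonal set to be null. The cleanest fix is to drop the insistence on left-endpoint step functions: since $g$ is bounded and measurable on $[0,t]$, general step functions (with coefficients bounded by $\|g\|_\infty$) are dense in $L^1$, and your telescoping estimate on each subinterval works verbatim for any step function. With that adjustment the argument is complete. (Also, the bound on $g$ is $K$ or $Kc$, not $K^2$, since $\bar F_s\le 1$; this is cosmetic.)
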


\begin{proof}
Starting from \eqref{eq:def-m-t-epsilon} and upon introducing the notation $h(s,t) = \lambda(s)\bar{F}_s(t-s)$, we write $m^{\ep} (t)$ as
\begin{equation}\label{eq:m-ep-with-h_s-t}
m^\ep(t) = \int_0^t h(s,t)\psi(Z_{s/\ep})\,\dd s.
\end{equation}
We then compute, making use of a simple integration by parts,
\begin{eqnarray}
m^\ep(t) & = & 
\int_0^t h(s,t)\frac{\dd}{\dd s}\left(\int_0^s\psi(Z_{r/\ep})\,\dd r\right)\,\dd s \notag \\
& = & 
\left.h(s,t)\int_0^s\psi(Z_{r/\ep})\,\dd r\right|_{s=0}^{s=t}
-
\int_0^t \frac{\dd}{\dd s}h(s,t)\left(\int_0^s\psi(Z_{r/\ep})\,\dd r\right)\,\dd s \notag \\
& = & 
A^{\ep}_1 (t) - A^{\ep}_2 (t) \,, \label{eq:m-ep-with-h(s,t)}
\end{eqnarray}
where we have set
\begin{equation}\label{eq:A-ep-terms}
A^{\ep}_1 (t) = h(t,t)\int_0^t\psi(Z_{r/\ep})\,\dd r \,, \quad \text{and} \quad
A^{\ep}_2 (t) = \int_0^t \frac{\dd}{\dd s}h(s,t)\left(\int_0^s\psi(Z_{r/\ep})\,\dd r\right)\,\dd s \,.
\end{equation}
We now treat the limits of $A^{\ep}_1 (t)$ and $A^{\ep}_2 (t)$ separately.

The term $A^{\ep}_1 (t)$ can be analyzed as follows. The elementary change of variables $r := r/\ep$ yields
\begin{equation*}
A^{\ep}_1 (t) = h(t,t)t \,\lim_{\ep\to 0} \, \frac{\ep}{t}\int_0^{t/\ep} \psi(Z_{r})\,\dd r\,.
\end{equation*}
Hence, invoking \cref{hyp:ergodicity} we get
\begin{equation}\label{eq:A1andA2-limit}
\lim_{\ep\to 0} A^{\ep}_1 (t)
= h(t,t)t\bar{\psi} \,, \qquad \PP_Z-\text{a.s.}
\end{equation}

For $A^{\ep}_2 (t)$, we add and subtract $\bar{\psi}$ to get
\begin{eqnarray}
A^{\ep}_2 (t)
& = & 
\int_0^t \left(\frac{\dd}{\dd s}h(s,t)\right) s
\left[
\frac{1}{s}\int_0^s\psi(Z_{r/\ep})\, \dd r - \bar{\psi} + \bar{\psi}
\right]\,\dd s \notag \\
& = & 
\int_0^t \left(\frac{\dd}{\dd s}h(s,t)\right)s\bar{\psi}\,\dd s
+
\int_0^t \left(\frac{\dd}{\dd s}h(s,t)\right)s
\left[
\frac{1}{s}\int_0^s\psi(Z_{r/\ep})\,\dd r - \bar{\psi}
\right]\,\dd s \notag\\
& \equiv &
A^{\ep}_{2,1} (t) + A^{\ep}_{2,2} (t) \,. \label{eq:A3-secondintegral}
\end{eqnarray}

\noindent
We now proceed to bound the term $A^{\ep}_{2,2} (t)$ in relation \eqref{eq:A3-secondintegral}. Namely, a trivial integral bound  and the same change of variables as for $A^{\ep}_1 (t)$ enable us to write
\begin{equation*}
\left| A^{\ep}_{2,2} (t) \right|
\leq
\iot s \left| \frac{\dd}{\dd s}h(s,t)\right|
\left|
\frac{\ep}{s}\int_0^{s/\ep}\psi(Z_{r})\,\dd r - \bar{\psi}
\right|\,\dd s \,.
\end{equation*}
Hence, invoking \cref{hyp:ergodicity} we get
\begin{equation*}
\left| A^{\ep}_{2,2} (t) \right|
\leq
C \iot s \left| \frac{\dd}{\dd s}h(s,t) \right|
(1 + s/\ep)^{-\ka}
\,\dd s \,,
\end{equation*}
and applying the Dominated Convergence Theorem we end up with
\begin{equation}\label{eq:limit-A-ep-2-2}
\lim_{\ep \to 0} A^{\ep}_{2,2} (t) = 0 \,.
\end{equation}
We also notice that the term $A^{\ep}_{2,1} (t)$ introduced in \eqref{eq:A3-secondintegral} can be simplified thanks to an elementary integration by parts. We get
\begin{equation}\label{eq:m-ep-limit1-int}
A^{\ep}_{2,1} (t) =
\int_0^t \left(\frac{\dd}{\dd s}h(s,t)\right)s\bar{\psi}\,\dd s
=
h(t,t) t \bar{\psi} - \bar{\psi} \iot h(s,t)\,\dd s \,,
\end{equation}
the right-hand side of which is finite, owing to \cref{hyp:arr-times-Gamma,hyp:F-bar-increments}. Therefore plugging~\eqref{eq:m-ep-limit1-int} and \eqref{eq:limit-A-ep-2-2} into relation \eqref{eq:A3-secondintegral}, we have obtained
\begin{equation}\label{eq:A2(t)-ep-limit}
\lim_{\ep \to 0} A^{\ep}_2 (t) = h(t,t) t \bar{\psi} - \bar{\psi} \iot h(s,t)\,\dd s \,.
\end{equation}

We can now conclude as follows. Gathering \eqref{eq:A1andA2-limit} and \eqref{eq:A2(t)-ep-limit} into \eqref{eq:m-ep-with-h(s,t)}, we have
\begin{equation*}
\lim_{\ep \to 0} m^{\ep} (t) =
\bar{\psi} \iot h(s,t)\,\dd s \,, \quad \PP_Z-\text{a.s.} \, ,
\end{equation*}
which is exactly our claim \eqref{eq:lim-m-ep}. This completes the proof.
\end{proof}

\section{Homogenized Process}\label{sect:homogenized-process}

With the limiting behavior of $m^{\ep}$ in hand, we are now ready to give the asymptotic description of the process $N^{\ep}$. As usual, we will decompose this analysis into a study of the convergence of the 
finite dimensional distributions and a tightness result.

\subsection{Limit for the finite dimensional distributions}

In order to simplify our presentation, we will first derive the limit of bivariate quantities of the form $(N^{\ep} (t_1), N^{\ep} (t_2) )$ for two instants $t_1 < t_2$. 
To this aim, inside the quadrant $\R_+ \times \R_+$, we will consider
three disjoint regions $\{A_i \,, \, i = 1,2,3\}$ defined as follows (see \cref{fig:A_i-region}):
\begin{eqnarray}
A_1 &=& \{(\ga, l) \in \mathbb{R}_+ \times \mathbb{R}_+ : \ga \leq t_1  \text{ and }  t_1 < \ga+l \leq t_2\} \,;
\label{def:region_A1}\\
A_2 &=& \{(\ga, l)\in \mathbb{R}_+ \times \mathbb{R}_+ : \ga \leq t_1   \text{ and }  t_2 < \ga + l\} \,;
\label{def:region_A2}\\
A_3 &=& \{(\ga, l) \in \mathbb{R}_+ \times \mathbb{R}_+ : t_1 < \ga \leq t_2   \text{ and }  t_2 < \ga+l\} \,.
\label{def:region_A3}
\end{eqnarray}

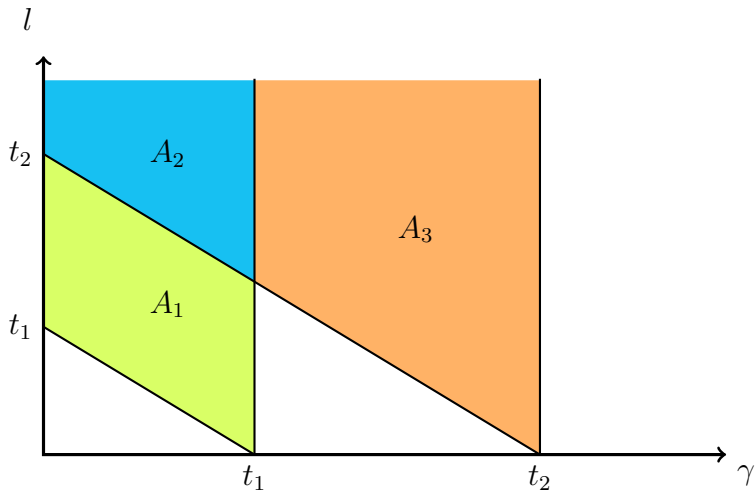
\begin{figure}
\begin{tikzpicture}[xscale = 1.65]
\path [fill = lime!60] (1.7,0) -- (1.7,2.3) -- (0,4) -- (0,1.7);
\path [fill = cyan!70] (1.7,2.3) -- (1.7,4.98) -- (0,4.98) -- (0,4);
\path [fill = orange!60] (4,0) -- (4,4.98) -- (1.7,4.98) -- (1.7,2.3);

\draw [very thick] [<->] (0,5.3) -- (0,0) -- (5.5,0);
\draw [thick] (0,1.7) -- (1.7,0);
\node [left] at (0,1.7) {$t_1$};
\draw [thick] (0, 4) -- (4 ,0);
\node [left] at (0,4) {$t_2$};
\draw [thick] (1.7,0) -- (1.7, 5);
\node [below] at (1.7,0) {$t_1$};
\draw [thick] (4, 0) -- (4,5);
\node [below] at (4,0) {$t_2$};
\node [above left] at (0,5.5) {$l$};
\node [below right] at (5.5,0) {$\gamma$};

\node at (1, 2) {$A_1$};
\node at (1, 4) {$A_2$};
\node at (3, 3) {$A_3$};
\end{tikzpicture}
\caption{Three disjoint regions used for the limit of bivariate quantities.}
\label{fig:A_i-region}
\end{figure}

\noindent
Notice that since the $A_i$'s are are disjoint, the quantities $\{M^{\ep} (A_i); \, i = 1,2,3\}$ are independent Poisson random variables. 
Similar to the proof of \eqref{eq:m-t-def}, 
their respective quenched means are given by (see \cref{prop:m-t-of-N-t})
\begin{equation}\label{eq:def-mean-M-ep}
m^{\ep}_i := \mathbb{E}_Z [M^{\ep} (A_i)] = \int_{A_i} \nu (s, \dd r) \la (s) \psi (Z_{s/\ep}) \dd s \,, \quad \text{for}\,\,\,i=1,2,3.
\end{equation}

\noindent
Now observe that the vector $(N^{\ep} (t_1) , N^{\ep} (t_2))$ can be decomposed as
\begin{equation}\label{eq:N-ep(t)}
N^{\ep} (t_1) = M^{\ep} (A_1) + M^{\ep} (A_2) , \hspace{20pt} N^{\ep} (t_2) = M^{\ep} (A_2) + M^{\ep} (A_3) .
\end{equation}
As a consequence, the means $m^{\ep} (t_1)$, $m^{\ep} (t_2)$ can also be written in terms of the $m_{i}^{\ep}$'s given by \eqref{eq:def-mean-M-ep}:
\begin{equation}\label{eq:m-ep(t)}
m^{\ep} (t_1) = m^{\ep}_1 + m^{\ep}_2 , \hspace{20pt} m^{\ep} (t_2) = m^{\ep}_2 + m^{\ep}_3 .
\end{equation}

We now state a proposition giving the quenched limit in law for $(N^{\ep} (t_1) , N^{\ep} (t_2))$. 

\begin{proposition}\label{Thm:convergence-N-ep(t1t2)}
Let $M^{\ep}$ be the Poisson random measure on $\mathbb{R}_+ \times \mathbb{R}_+$ defined by \eqref{eq:M-sum}, with mean measure $\tilde{\nu}^{\ep} (\dd s, \dd r) = \nu (s,\dd r) \mu^{\ep} (\dd s)$, as given in \eqref{eq:def-mu-la-psi}.
Assume that the conditions in \cref{hyp:ergodicity,hyp:arr-times-Gamma,hyp:F-bar-increments} are met.
Then for any fixed two time points $0 \leq t_1 < t_2$,
we have the following statements.

\begin{enumerate}[wide, labelwidth=!, labelindent=0pt, label=\emph{(\roman*)}]
\setlength\itemsep{.1in}

\item\label{it:convergence-nep-i}
$\mathbb{P}_Z$-almost surely we have that for all $\xi_1$, $\xi_2>0$,
\begin{equation}\label{eq:lim-E_Z-N-twotp}
\lim_{\ep \to 0} \mathbb{E}_Z [e^{- (\xi_1 N^{\ep} (t_1) + \xi_2 N^{\ep} (t_2))}]
=
\mathbb{E} [e^{-(\xi_1 N(t_1) + \xi_2 N(t_2))}] \,.
\end{equation}
In the right-hand side of equation \eqref{eq:lim-E_Z-N-twotp}, the process $\{N (t) ; \, t \geq 0\}$is independent of $N^{\ep}$ and is defined similarly to that of \eqref{eq:N-t}, albeit in a non-random environment. Namely, $N$ can be expressed as
\begin{equation}\label{def-eq:N-t}
N(t) = \sum_{k=1}^{\infty} \mathbbm{1}_{\{ \Gamma_k < t < \Gamma_k + L_k \}}
=
M \{ (x, y) \in \mathbb{R}_+ \times \mathbb{R}_+ , x <t< x+y\} \,,
\end{equation}
with $M$ being a Poisson point process on $\mathbb{R}_+ \times \mathbb{R}_+$ of the form $M = \sum_{k=1}^{\infty} \delta_{(\Gamma_k, L_k)}$. The mean measure of $M$ is given by
\begin{equation}\label{eqdef:nu-meanmeasure}
\tilde{\nu} (\dd s, \dd r) = \bar{\psi} \la (s) \nu (s, \dd r) \dd s \,,
\end{equation}
where we recall that $\bar{\psi}$ is defined by \eqref{eq:quantity-sigma-psi-bar}.

\item\label{it:convergence-nep-ii}
$\mathbb{P}_Z$-almost surely we have the following limit in law as $\ep \to 0$:
\begin{equation*}
(N^{\ep} (t_1), N^{\ep} (t_2)) \xrightarrow{(\dd)} (N(t_1), N(t_2)) .
\end{equation*}
\end{enumerate}
\end{proposition}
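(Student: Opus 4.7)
The strategy rests on the Poisson structure together with the disjointness of $A_1, A_2, A_3$. Under $\PP_Z$, the Poisson random measure $M^\ep$ makes $M^\ep(A_1)$, $M^\ep(A_2)$, $M^\ep(A_3)$ independent Poisson variables with respective means $m_i^\ep$ as in~\eqref{eq:def-mean-M-ep}. Combining this with the decomposition~\eqref{eq:N-ep(t)}, the joint quenched Laplace transform factors as
\begin{equation*}
\E_Z\bigl[e^{-\xi_1 N^\ep(t_1) - \xi_2 N^\ep(t_2)}\bigr] = \exp\Bigl\{-m_1^\ep(1-e^{-\xi_1}) - m_2^\ep(1-e^{-\xi_1-\xi_2}) - m_3^\ep(1-e^{-\xi_2})\Bigr\}.
\end{equation*}
Thus part~\ref{it:convergence-nep-i} reduces to establishing the $\PP_Z$-a.s.\ convergence $m_i^\ep \to \bar m_i := \bar\psi \int_{A_i} \la(s)\,\nu(s,\dd r)\,\dd s$ for each $i = 1,2,3$.

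To obtain these three convergences, I would first invoke \cref{Thm:limit-m-ep} for $m^\ep(t_1)$ and $m^\ep(t_2)$, which via~\eqref{eq:m-ep(t)} pins down the limits of $m_1^\ep + m_2^\ep$ and $m_2^\ep + m_3^\ep$. It then suffices to handle $m_2^\ep$ separately. Writing
\begin{equation*}
m_2^\ep = \int_0^{t_1} \la(s)\,\psi(Z_{s/\ep})\,\bar{F}_s(t_2 - s)\,\dd s,
\end{equation*}
this quantity has precisely the structural form of~\eqref{eq:def-m-t-epsilon}, with modified kernel $h(s) = \la(s)\bar{F}_s(t_2 - s)$ on the truncated interval $[0,t_1]$. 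The integration-by-parts argument developed in the proof of \cref{Thm:limit-m-ep} then applies verbatim: \cref{hyp:arr-times-Gamma,hyp:F-bar-increments} provide the needed regularity and integrability of $h$, and \cref{hyp:ergodicity} furnishes the ergodic averaging of $\psi(Z_{\cdot/\ep})$. The convergences $m_1^\ep \to \bar m_1$ and $m_3^\ep \to \bar m_3$ then follow by subtraction.

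To identify the limit as the claimed Laplace transform, I would apply the decomposition~\eqref{eq:N-ep(t)} to the limiting Poisson random measure $M$ with mean~\eqref{eqdef:nu-meanmeasure}: since $\bar m_i = \E[M(A_i)]$ and the $M(A_i)$ are independent Poisson under $\PP$, the limiting exponential coincides exactly with $\E[e^{-\xi_1 N(t_1) - \xi_2 N(t_2)}]$. This yields~\eqref{eq:lim-E_Z-N-twotp}. Part~\ref{it:convergence-nep-ii} then follows at once: since $(N^\ep(t_1), N^\ep(t_2))$ takes values in $\mathbb{N}^2$, pointwise convergence of the Laplace transform on $(0,\infty)^2$ is equivalent to convergence in distribution by the standard continuity theorem for Laplace transforms of $\mathbb{Z}_+$-valued random vectors.

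The principal technical point is the individual convergence $m_2^\ep \to \bar m_2$. Though structurally identical to \cref{Thm:limit-m-ep}, one must still verify that the integration-by-parts and dominated-convergence steps carry through on integration range $[0,t_1]$ (with the mismatched argument $t_2 - s$ inside $\bar{F}_s$, which remains uniformly bounded and Lipschitz in $s$ on this interval). \cref{hyp:F-bar-increments} supplies precisely the tail bound and Lipschitz-in-$s$ estimate needed, so the extension is expected to be routine.
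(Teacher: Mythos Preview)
Your proposal is correct and follows essentially the same approach as the paper: decompose via the disjoint regions $A_1, A_2, A_3$, factor the joint Laplace transform using Poisson independence, and establish the convergence of each $m_i^\ep$ via the integration-by-parts argument of \cref{Thm:limit-m-ep}. The only organizational difference is that the paper computes all three limits directly through a general formula $I^\ep(\tau_1,\tau_2)=\int_0^{\tau_1} g(s,\tau_2)\psi(Z_{s/\ep})\,\dd s$, whereas you compute only $m_2^\ep$ directly and recover $m_1^\ep$, $m_3^\ep$ by subtraction from \cref{Thm:limit-m-ep}---a legitimate and slightly more economical shortcut.
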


\begin{remark}
In order to alleviate notations, we have assumed that our underlying probability space carries the family $\{ N^{\ep} ;\, \ep \geq 0\}$ as well as the process $N$ defined by \eqref{def-eq:N-t}. This explains why we have expressed \eqref{eq:lim-E_Z-N-twotp} with the same expectation $\E$ on both sides of the relation.
\end{remark}

\begin{proof}[Proof of \cref{Thm:convergence-N-ep(t1t2)}]
We first introduce the notation that will be used in the proof. For $i = 1,2,3,$ and $\la_i > 0$, we define the functions $f_i := \la_i \mathbbm{1}_{A_i} $, 
where the sets $A_i$'s are given by \eqref{def:region_A1}-\eqref{def:region_A3}. We note that $M^{\ep} (f_i) = \la_i M^{\ep} (A_i)$. By considering the Laplace 
transform of $M^{\ep} (A_i)$, we can see that Theorem 2.9 in Chapter VI of \cite{Cin} yields
\begin{equation}\label{eq:LaplaceTransform-M-ep}
\mathbb{E}_Z [e^{-\sum_{i = 1}^{3} \la_i M^{\ep} (A_i)}]
=
\mathbb{E}_Z [e^{-\sum_{i=1}^{3} M^{\ep} (f_i)}]
=
e^{-\sum_{i=1}^{3} \tilde{\nu}^{\ep} (1-e^{-f_i})} \,,
\end{equation}
where $\tilde{\nu}^{\ep}$ is the measure defined by \eqref{eq:def-mu-la-psi}.
In equation \eqref{eq:LaplaceTransform-M-ep}, we specify again that $\tilde{\nu}^{\ep} (1-e^{-f_i})$ stands for the integral of $(1-e^{-f_i})$ with respect to the measure $\tilde{\nu}^{\ep}$, as with \eqref{eq:LT-Mf}. We now split the analysis of \eqref{eq:LaplaceTransform-M-ep} into several steps.

\noindent
{\it Step 1: Decomposition of the Laplace transform.} Taking into account the expression \eqref{eq:def-mu-la-psi} for $\tilde{\nu}^{\ep}$, the right-hand side of \eqref{eq:LaplaceTransform-M-ep} can be rewritten as
\begin{equation}\label{eq:LaplaceTransform-M-ep-ver1}
e^{-\sum_{i=1}^{3} \tilde{\nu}^{\ep} (1-e^{-f_i})}
=
\prod_{i=1}^{3} \exp \left\{-\tilde{\nu}^{\ep} (1-e^{-f_i}) \right\}
=
\prod_{i=1}^{3} \exp \left\{ - G_i^{\ep} \right\} ,
\end{equation}
where each function $G_i^{\ep}$, for $i = 1,2,3,$ is given as the following integral,
\begin{eqnarray}
G_i^{\ep} & = & \int_{\R_+ \times \R_+} (1-e^{-f_i}) \nu (s, \dd r) \mu^{\ep} (\dd s) \notag \\
& = & \int_{\R_+ \times \R_+} (1-e^{-f_i}) \nu (s, \dd r) \la (s) \psi (Z_{s/ \ep}) \dd s \,. \label{eq:G_i_ep}
\end{eqnarray}
In the sequel, we shall characterize the limit of each $G_i^{\ep}$.
Note first that owing to the relation $f_i = \la_i \mathbbm{1}_{A_i}\,$, we have
\begin{equation*}
1 - e^{- f_i (s,r)} = (1 - e^{- \la_i} ) \mathbbm{1}_{A_i} (s,r) .
\end{equation*}
Therefore, one can recast the term $G^{\ep}_i$ as
\begin{equation}\label{eqdef:m1-ep}
G_i^{\ep} = (1 - e^{- \la_i}) m_i^{\ep} \,,
\quad \text{where we recall that}\,\,\,
m_i^{\ep} = \int_{A_i} \nu (s, \dd r) \la (s) \psi (Z_{s/\ep}) \dd s .
\end{equation}

\noindent
In summary, substituting \eqref{eqdef:m1-ep} into \eqref{eq:LaplaceTransform-M-ep-ver1} and then \eqref{eq:LaplaceTransform-M-ep}, we have shown that
\begin{equation}\label{eq:LaplaceTransform-M-ep-ver2}
\E_Z [e^{-\sum_{i = 1}^{3} \la_i M^{\ep} (A_i)}] =
\prod_{i=1}^{3} \exp \left\{ -  (1 - e^{- \la_i}) m_i^{\ep} \right\}.
\end{equation}
We are now reduced to an examination of $\lim_{\ep\to 0} m^{\ep}_i$ in the right-hand side of equation~\eqref{eq:LaplaceTransform-M-ep-ver2}.

\noindent
{\it Step 2: Analysis of an integral with fast oscillatory integrand.}
In order to handle the terms $m^{\ep}_i$ in \eqref{eqdef:m1-ep}, we will generalize slightly the analysis of integral expressions like \eqref{eq:m-ep-with-h_s-t}. Namely, consider a continuously differentiable function $g: \R_+ \to \R$ and for $\ep >0$,  let $I^\ep(\tau_1, \tau_2)$ be given in the following integral:
\begin{equation}\label{eq:I-ep-of-tau}
I^\ep(\tau_1, \tau_2) 
= \int_0^{\tau_1} g(s,\tau_2)\psi \left(Z_{s/\ep}\right)\,\dd s \,,
\end{equation}
for $0 \leq \tau_1 \leq \tau_2$. Then following the same integration by parts procedure as for \eqref{eq:m-ep-with-h(s,t)} in the proof of \cref{Thm:limit-m-ep}, we have
\begin{eqnarray*}
I^\ep(\tau_1, \tau_2) 
& = & 
\int_0^{\tau_1} g(s,\tau_2)
\left(\frac{\dd}{\dd s}\int_0^s
\psi\left(Z_{r/\ep}\right)\,\dd r\right)\,\dd s\\
& = & \left.
g(s,\tau_2) 
\int_0^s\psi\left(Z_{r/\ep}\right)\,\dd r
\right|_{s=0}^{s=\tau_1}
-\int_0^{\tau_1} \frac{\dd}{\dd s}g(s,\tau_2)
\left(\int_0^s\psi\left(Z_{r/\ep}\right)\, \dd r\right)\,\dd s\\
& = & 
g(\tau_1,\tau_2) 
\int_0^{\tau_1}\psi\left(Z_{r/\ep}\right)\,\dd r
-\int_0^{\tau_1} \frac{\dd}{\dd s}g(s,\tau_2)
\left(\int_0^s\psi\left(Z_{r/\ep}\right)\,\dd r\right)\,\dd s \,.
\end{eqnarray*}
Then following the same steps as for the analysis of $A^{\ep}_1 (t)$ and $A^{\ep}_2 (t)$ in the proof of \cref{Thm:limit-m-ep} (see respectively \eqref{eq:A1andA2-limit} and \eqref{eq:A2(t)-ep-limit}),
we compute the limit of $I^\ep(\tau_1, \tau_2)$ as $\ep \to 0$:
\begin{eqnarray}
\lim_{\ep\to 0}I^\ep(\tau_1, \tau_2) 
& = & 
g(\tau_1,\tau_2) \tau_1\bar{\psi}
-\int_0^{\tau_1} \left(\frac{\dd}{\dd s}g(s,\tau_2)\right)s\bar{\psi}\,\dd s \notag\\
& = & 
\left[g(\tau_1, \tau_2)\tau_1  
- g(s,\tau_2)s\bigg|_{s=0}^{s=\tau_1} 
+ \int_0^{\tau_1}g(s,\tau_2)\,\dd s\right]\bar{\psi} \notag \\
& = & 
\left[\int_0^{\tau_1}g(s,\tau_2)\,\dd s\right]\bar{\psi}\,, \label{eq:limit-I-ep-for-tau}
\end{eqnarray}
where the second equality is obtained thanks to another use of the integration by parts formula.

\noindent
{\it Step 3: Analysis of $m_i^{\ep}$, for $i=1, 2, 3$.}
We will now resort to the general identity \eqref{eq:limit-I-ep-for-tau} in order to analyze the terms $m^{\ep}_i$ in \eqref{eqdef:m1-ep}. 
We start by recasting $m^{\ep}_1$ as an expression involving \eqref{eq:I-ep-of-tau}. Namely, invoking the definition \eqref{eq:def-bar-F} of $\bar{F}_s$, we write
\begin{align}
m_1^{\ep} 
& = \int_{0}^{t_1} \bigg( \la (s) \psi (Z_{s/\ep}) \int_{t_1 - s}^{t_2 - s} \nu (s, \dd r) \bigg) \dd s \notag\\ 
&= \int_{0}^{t_1}  \la (s) \psi (Z_{s/\ep}) \Big(\bar{F}_s (t_1-s) - \bar{F}_s (t_2 - s) \Big) \dd s \notag\\
& = \int_{0}^{t_1} \la (s)\bar{F}_s (t_1-s)\psi (Z_{s/\ep})\, \dd s 
- \int_{0}^{t_1} \la (s) \bar{F}_s (t_2-s)\psi (Z_{s/\ep}) \, \dd s \,. \label{eq:m1-ep-ver2}
\end{align}
Upon setting
\begin{equation}\label{eq:g-s-t-for-h_st}
g(s,t) = \la(s)\bar{F}_s(t-s),
\end{equation}
and recalling \eqref{eq:I-ep-of-tau}, we can rewrite the expression \eqref{eq:m1-ep-ver2} of $m^{\ep}_1$ as follows:
\begin{equation*}
m^{\ep}_1 = \int_{0}^{t_1} g(s,t_1) \psi (Z_{s/\ep}) \dd s - \int_{0}^{t_1} g(s, t_2) \psi (Z_{s/\ep}) \dd s
= I^{\ep} (t_1, t_1) - I^{\ep} (t_1, t_2) \,.
\end{equation*}

\noindent
Due to the fact that $\la$ and $\bar{F}_s$ are continuous and bounded functions, we can apply directly the result \eqref{eq:limit-I-ep-for-tau} from Step 2. We get the following $\PP_Z$-almost sure limit for $m^{\ep}_1$:
\begin{eqnarray}\label{eq:limit-m1-ep}
\lim_{\ep \to 0} m_{1}^{\ep} 
&=& \left[\int_0^{t_1}g(s,t_1)\,\dd s\right]\bar{\psi} - \left[\int_0^{t_1}g(s,t_2)\,\dd s\right]\bar{\psi} 
\notag \\
&=& \left[\int_{0}^{t_1} \lc g(s,t_1) - g(s, t_2) \rc \,\dd s \right] \bar{\psi}
\equiv m_1\,,
\end{eqnarray}
where we recall that the function $g$ is given by \eqref{eq:g-s-t-for-h_st}.

The analysis of $m^{\ep}_2$ and $m^{\ep}_3$ are obtained along similar lines.
Hence we will just write down the main steps and invite the patient reader to 
fill in the corresponding details. First we have the following expressions
\begin{eqnarray}
m_{2}^{\ep}
&=&
\int_{0}^{t_1} \la (s) \bar{F}_s (t_2-s)\psi (Z_{s/\ep})\dd s \,; \notag\\
m_{3}^{\ep}
&=&
 \int_{0}^{t_2} \la (s) \bar{F}_s (t_2 - s)\psi (Z_{s/\ep})\dd s 
- \int_{0}^{t_1} \la (s) \bar{F}_s (t_2 - s)\psi (Z_{s/\ep})\dd s \,. \label{eq:m3-ep-ver2}
\end{eqnarray}
Then we can obtain a $\PP_Z$-almost sure limit of the form
\begin{align}
\lim_{\ep \to 0} m_{2}^{\ep} 
& = \left[\int_0^{t_1}g(s,t_2)\,\dd s\right]\bar{\psi} \equiv m_2\,;\label{eq:limit-m2-ep}\\
\lim_{\ep \to 0} m_{3}^{\ep} 
&= \left[\int_0^{t_2}g(s,t_2)\,\dd s\right]\bar{\psi} - \left[\int_0^{t_1}g(s,t_2)\,\dd s\right]\bar{\psi}
= \left[\int_{t_1}^{t_2} g(s,t_2) \,\dd s\right] \bar{\psi} \equiv m_3\,. \label{eq:limit-m3-ep}\
\end{align}

\noindent
Hence, gathering \eqref{eq:limit-m1-ep}, \eqref{eq:limit-m2-ep}, and \eqref{eq:limit-m3-ep} into relation \eqref{eq:LaplaceTransform-M-ep-ver2}, we have obtained
\begin{equation}\label{eq:LaTrans-M-ep}
\lim_{\ep \to 0} \,\E_Z [e^{-\sum_{i = 1}^{3} \la_i M^{\ep} (A_i)}] =
\prod_{i=1}^{3} \exp \left\{ -  (1 - e^{- \la_i}) m_i \right\} .
\end{equation}

\noindent
{\it Step 4: Final concluding steps.}
Recall that our aim is to analyze the left-hand side of relation \eqref{eq:lim-E_Z-N-twotp}. To this aim, notice that owing to relation \eqref{eq:N-ep(t)}, we have
\begin{equation*}
\E_Z [e^{- (\xi_1 N^{\ep} (t_1) + \xi_2 N^{\ep} (t_2))}]
=
\E_Z [e^{-\sum_{i = 1}^{3} \la_i M^{\ep} (A_i)}] \,,
\end{equation*}
where we have set
\begin{equation}\label{eq:LaTrans-constants}
\la_1 = \xi_1 \,, \quad \la_2 = \xi_1+\xi_2 \,, \quad \text{and} \quad \la_3 = \xi_2 \,.
\end{equation}
Therefore, an immediate application of \eqref{eq:LaTrans-M-ep} yields
\begin{equation}\label{eq:LaTrans-M-Ver}
\lim_{\ep \to 0} \, \E_Z \lc e^{- (\xi_1 N^{\ep} (t_1) + \xi_2 N^{\ep} (t_2))} \rc
=
\prod_{i=1}^{3} \exp \left\{ -  (1 - e^{- \la_i}) m_i \right\} .
\end{equation}
By the definition $f_i := \la_i \mathbbm{1}_{A_i}$ given at the beginning of our proof and the expressions ~\eqref{eq:limit-m1-ep}, ~\eqref{eq:limit-m2-ep}, and ~\eqref{eq:limit-m3-ep} for $m_1$, $m_2$, and $m_3$, respectively, relation \eqref{eq:LaTrans-M-Ver} can be rewritten as
\begin{equation}\label{eq:LaTrans-M-Ver-expsum}
\lim_{\ep \to 0} \, \E_Z \lc e^{- (\xi_1 N^{\ep} (t_1) + \xi_2 N^{\ep} (t_2))} \rc
=
e^{-\sum_{i=1}^{3} \tilde{\nu} (1-e^{-f_i})} \,,
\end{equation}
where $\tilde{\nu}$ stands for the mean measure of the point process $M$ (see equation \eqref{eqdef:nu-meanmeasure}).
Taking into account the values \eqref{eq:LaTrans-constants} for $\la_1$, $\la_2$, $\la_3$ and the definition \eqref{def-eq:N-t} of the process $N$, then by Chapter VI Theorem 2.9 in ~\cite{Cin} we end up with
\begin{equation*}
e^{-\sum_{i=1}^{3} \tilde{\nu} (1-e^{-f_i})}
=
\E \lc e^{-\sum_{i=1}^{3} M (f_i)} \rc
=
\E \lc e^{-\sum_{i = 1}^{3} \la_i M (A_i)} \rc
=
\E \lc e^{-(\xi_1 N(t_1) + \xi_2 N(t_2))} \rc \,,
\end{equation*}
which leads to our claim \eqref{eq:lim-E_Z-N-twotp}. This proves statement \emph{\ref{it:convergence-nep-i}}.

Finally, 
in the light of statement \emph{\ref{it:convergence-nep-i}} and Laplace transform properties, 
statement \emph{\ref{it:convergence-nep-ii}} holds.
\end{proof}

Once the limit for the bivariate vector $(N^{\ep} (t_1), N^{\ep} (t_2))$ is obtained, the extension to the multivariate case can be done through routine (though tedious) considerations. We state this generalization and a sketch of its proof below.

\begin{corollary}\label{Cor:convergence-N-ep}
With the assumptions in \cref{Thm:convergence-N-ep(t1t2)}, let $t_1< t_2< \cdots <t_n$ be fixed. Then $\mathbb{P}_Z$-almost surely we have that
\begin{equation*}
(N^{\ep} (t_1), N^{\ep} (t_2), \ldots , N^{\ep} (t_n)) \xrightarrow{(\dd)} (N(t_1), N(t_2), \ldots , N(t_n)) \,.
\end{equation*}
\end{corollary}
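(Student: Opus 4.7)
The plan is to directly mimic the proof of \cref{Thm:convergence-N-ep(t1t2)}, with the bivariate partition $\{A_1, A_2, A_3\}$ of the quadrant $\R_+ \times \R_+$ replaced by a finer partition indexed by pairs of sub-intervals of $[0,T]$. Setting $t_0 = 0$ and $t_{n+1} = \infty$, for each pair $(i,j)$ with $1 \le i \le j \le n$ define
\begin{equation*}
A_{i,j} = \{(\gamma, l) \in \R_+ \times \R_+ : t_{i-1} < \gamma \le t_i \ \text{and} \ t_j < \gamma + l \le t_{j+1}\}.
\end{equation*}
A job $(\gamma, l) \in A_{i,j}$ is active precisely at the epochs $t_i, t_{i+1}, \ldots, t_j$. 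The regions $\{A_{i,j}\}$ are pairwise disjoint, so under $\PP_Z$ the random variables $\{M^{\ep}(A_{i,j})\}$ are independent Poisson with respective parameters $m^{\ep}_{i,j} = \int_{A_{i,j}} \nu(s, \dd r)\, \lambda(s)\, \psi(Z_{s/\ep})\, \dd s$, and one has the bookkeeping identity
\begin{equation*}
N^{\ep}(t_k) = \sum_{i=1}^{k} \sum_{j=k}^{n} M^{\ep}(A_{i,j}), \qquad k = 1, \ldots, n.
\end{equation*}

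Next, for arbitrary $\xi_1, \ldots, \xi_n > 0$, a rearrangement yields $\sum_{k=1}^{n} \xi_k N^{\ep}(t_k) = \sum_{1 \le i \le j \le n} \lambda_{i,j}\, M^{\ep}(A_{i,j})$ with $\lambda_{i,j} = \xi_i + \xi_{i+1} + \cdots + \xi_j$, and the Poisson-measure Laplace formula (as in \eqref{eq:LaplaceTransform-M-ep}) gives
\begin{equation*}
\E_Z \Big[ \exp\Big(- \sum_{k=1}^{n} \xi_k N^{\ep}(t_k)\Big) \Big] = \prod_{1 \le i \le j \le n} \exp\{-(1 - e^{-\lambda_{i,j}}) m^{\ep}_{i,j}\}.
\end{equation*}
To analyse each $m^{\ep}_{i,j}$, I would rewrite it via the tail function $\bar F_s$ as
\begin{equation*}
m^{\ep}_{i,j} = \int_{t_{i-1}}^{t_i} \lambda(s)\, \psi(Z_{s/\ep})\, \big(\bar F_s(t_j - s) - \bar F_s(t_{j+1} - s)\big)\, \dd s,
\end{equation*}
with the convention $\bar F_s(\infty) = 0$ when $j = n$, and split this into a linear combination of integrals of the form $I^{\ep}(\tau_1, \tau_2)$ introduced in \eqref{eq:I-ep-of-tau}. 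A direct application of the limit identity \eqref{eq:limit-I-ep-for-tau} then delivers the $\PP_Z$-almost sure convergence
\begin{equation*}
\lim_{\ep \to 0} m^{\ep}_{i,j} = m_{i,j} := \bar\psi \int_{t_{i-1}}^{t_i} \lambda(s)\, \big(\bar F_s(t_j - s) - \bar F_s(t_{j+1} - s)\big)\, \dd s.
\end{equation*}

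To conclude, I would identify $\prod_{i \le j} \exp\{-(1 - e^{-\lambda_{i,j}}) m_{i,j}\}$ with the joint Laplace transform of $(N(t_1), \ldots, N(t_n))$. Indeed, the homogenized Poisson measure $M$ with mean measure $\tilde\nu$ from \eqref{eqdef:nu-meanmeasure} decomposes on the same partition into independent Poisson atoms $M(A_{i,j})$ of parameter precisely $m_{i,j}$, and $N(t_k) = \sum_{i \le k \le j} M(A_{i,j})$; running the Laplace-transform computation of Step 4 in the proof of \cref{Thm:convergence-N-ep(t1t2)} in reverse therefore yields $\prod_{i \le j} \exp\{-(1 - e^{-\lambda_{i,j}}) m_{i,j}\} = \E[\exp(-\sum_k \xi_k N(t_k))]$. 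Since Laplace transforms on $\R_+^n$ separate distributions on $\N^n$, this gives the claimed convergence in distribution. I expect the main obstacle to be purely combinatorial bookkeeping — correctly enumerating the regions $A_{i,j}$ and the coefficients $\lambda_{i,j}$ in a clean way — since the analytic content (the fast-oscillation averaging) is already fully supplied by Step 2 of the bivariate proof.
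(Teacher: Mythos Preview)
Your proposal is correct and follows essentially the same route as the paper: partition the quadrant into regions $A_{i,j}$ indexed by arrival and departure windows, express each $N^{\ep}(t_k)$ as a sum of the independent Poisson variables $M^{\ep}(A_{i,j})$, and reduce the Laplace-transform limit to the fast-oscillation averaging already carried out in Step~2 of the bivariate proof. If anything, your indexing convention $1 \le i \le j \le n$ (with $j$ recording the absolute departure window) and your bookkeeping identity $N^{\ep}(t_k) = \sum_{i \le k \le j} M^{\ep}(A_{i,j})$ are cleaner than the paper's sketch, which only outlines the decomposition and leaves the details to the reader.
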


\begin{proof}
Following the same procedures as in the proof of ~\cref{Thm:convergence-N-ep(t1t2)}, we first decompose the quadrant $\R_+ \times \R_+$ into several disjoint regions (see \cref{fig:A_i-region-n-timepoints} for a depiction of the sets $A_{i,j}$):
\[
\{A_{i, j} : \text{for} \,\, 1\leq i \leq n \,\, \text{and}\,\, 1 \leq j \leq n-i+1 \} \,,
\]
where for each $i$ and $j$, the region is defined as
\[
A_{i, j} = \left\{(\gamma, l) \in \R_+ \times \R_+: \, t_{i-1} \leq \gamma \leq t_i \,\,\text{and} \,\,t_j \leq \gamma + l \leq t_{j+1} \right\} ,
\]
with the additional convention $t_0 = 0$ and $t_{n+1} = \infty$. Since the $A_{i,j}$'s are disjoint regions, the quantities $\{M^{\ep} (A_{i,j}) : i, j = 1,2, ... , n\}$ are independent Poisson random variables. Similar to \eqref{eq:def-mean-M-ep}, their respective quenched means are given by
\begin{equation*}
\E_Z [M^{\ep} (A_{i,j})] = \int_{A_{i,j}} \nu (s, \dd r) \la (s) \psi (Z_{s/\ep}) \dd s \,.
\end{equation*}
Furthermore, as with \eqref{eq:N-ep(t)} and \eqref{eq:m-ep(t)}, the quantity $N^{\ep} (t_i)$ can be written as
\begin{equation*}
N^{\ep} (t_i) = \sum_{j = 1}^{n-i+1} M^{\ep} (A_{i,j}) \,,
\end{equation*}
whose mean can be expressed as
\begin{equation*}
\E_Z[N^{\ep} (t_i)] =
m^{\ep} (t_i) = \sum_{j = 1}^{n-i+1} \E_Z [M^{\ep} (A_{i,j})] \,.
\end{equation*}
Starting from this set of relations, the rest of the proof will be a repetition of 
that for \cref{Thm:convergence-N-ep(t1t2)}. We omit the details for the sake of conciseness.
\end{proof}

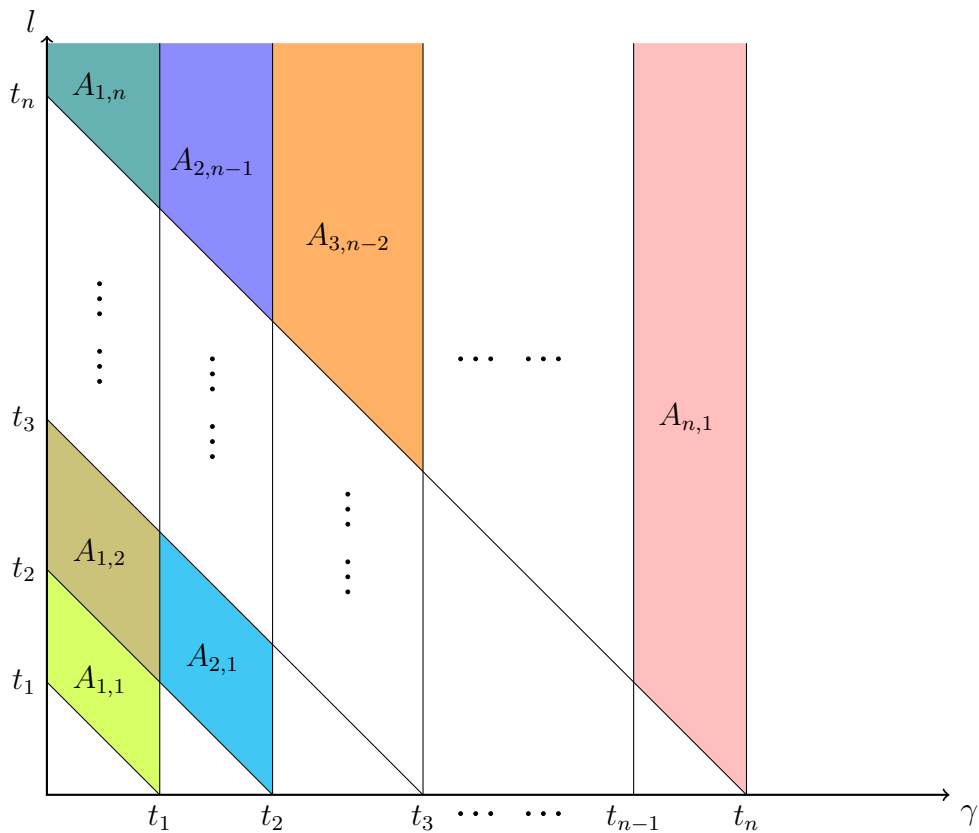
\begin{figure}
\begin{tikzpicture}
\path [fill = lime!60] (1.5,0) -- (1.5,1.5) -- (0,3) -- (0,1.5);
\path [fill = olive!50] (1.5,3.5) -- (1.5,1.5) -- (0,3) -- (0,5);
\path [fill = teal!60] (1.5,7.8) -- (1.5,10) -- (0,10) -- (0,9.3);

\path [fill = cyan!60] (1.5,1.5) -- (1.5,3.5) -- (3,2) -- (3,0);
\path [fill = blue!45] (1.5,7.8) -- (1.5,10) -- (3,10) -- (3,6.3);

\path [fill = orange!60] (3,10) -- (3,6.3) -- (5,4.3) -- (5,10);
\path [fill = pink] (7.8,1.5) -- (9.3,0) -- (9.3,10) -- (7.8,10);

\draw [thick] [<->] (0,10.1) -- (0,0) -- (12,0);
\draw (0,1.5) -- (1.5,0);
\node [left] at (0, 1.5) {$t_{1}$};
\draw (0, 3) -- (3,0);
\node [left] at (0, 3) {$t_{2}$};
\draw (0, 5) -- (5,0);
\node [left] at (0, 5) {$t_{3}$};
\draw (0, 9.3) -- (9.3,0);
\node [left] at (0, 9.3) {$t_{n}$};

\draw (1.5,0) -- (1.5, 10);
\node [below] at (1.5,0) {$t_1$};
\draw (3, 0) -- (3,10);
\node [below] at (3,0) {$t_2$};
\draw (5,0) -- (5, 10);
\node [below] at (5,0) {$t_3$};
\draw (7.8,0) -- (7.8,10);
\node [below] at (7.8,0) {$t_{n-1}$};
\draw (9.3,0) -- (9.3,10);
\node [below] at (9.3,0) {$t_{n}$};

\draw[fill] (5.5,5.8) circle [radius=0.025] ;
\draw[fill] (5.7,5.8) circle [radius=0.025] ;
\draw[fill] (5.9,5.8) circle [radius=0.025] ;

\draw[fill] (6.4,5.8) circle [radius=0.025] ;
\draw[fill] (6.6,5.8) circle [radius=0.025] ;
\draw[fill] (6.8,5.8) circle [radius=0.025] ;

\draw[fill] (5.5,-0.25) circle [radius=0.025] ;
\draw[fill] (5.7,-0.25) circle [radius=0.025] ;
\draw[fill] (5.9,-0.25) circle [radius=0.025] ;

\draw[fill] (6.4,-0.25) circle [radius=0.025] ;
\draw[fill] (6.6,-0.25) circle [radius=0.025] ;
\draw[fill] (6.8,-0.25) circle [radius=0.025] ;

\node at (0.7,1.5) {$A_{1,1}$};
\node at (0.7,3.2) {$A_{1,2}$};
\node at (0.7, 9.4) {$A_{1,n}$};
\draw[fill] (0.7,5.5) circle [radius=0.025] ;
\draw[fill] (0.7,5.7) circle [radius=0.025] ;
\draw[fill] (0.7,5.9) circle [radius=0.025] ;
\draw[fill] (0.7,6.4) circle [radius=0.025] ;
\draw[fill] (0.7,6.6) circle [radius=0.025] ;
\draw[fill] (0.7,6.8) circle [radius=0.025] ;

\node at (2.2, 1.8) {$A_{2,1}$};
\node at (2.2, 8.4) {$A_{2,n-1}$};
\draw[fill] (2.2,4.5) circle [radius=0.025] ;
\draw[fill] (2.2,4.7) circle [radius=0.025] ;
\draw[fill] (2.2,4.9) circle [radius=0.025] ;
\draw[fill] (2.2,5.4) circle [radius=0.025] ;
\draw[fill] (2.2,5.6) circle [radius=0.025] ;
\draw[fill] (2.2,5.8) circle [radius=0.025] ;

\node at (4, 7.4) {$A_{3,n-2}$};
\draw[fill] (4,2.7) circle [radius=0.025] ;
\draw[fill] (4,2.9) circle [radius=0.025] ;
\draw[fill] (4,3.1) circle [radius=0.025] ;
\draw[fill] (4,3.6) circle [radius=0.025] ;
\draw[fill] (4,3.8) circle [radius=0.025] ;
\draw[fill] (4,4) circle [radius=0.025] ;

\node at (8.5,5) {$A_{n,1}$};

\node [above left] at (0,10) {$l$};
\node [below right] at (12,0) {$\gamma$};

\end{tikzpicture}
\caption{Disjoint regions used for the limit of finite dimensional distributions.}
\label{fig:A_i-region-n-timepoints}
\end{figure}

\subsection{Tightness and Homogenization Results}
In this section, we shall summarize our previous considerations about the limiting behavior of our queueing system. This will yield the homogenization results alluded to in the introduction.
The next natural step in establishing our limiting description of 
the family $\{N^{\ep} ; \ep > 0\}$ is a tightness result. Due to the expression \eqref{eq:N-t} for $N^{\ep}$, it is natural to consider this process (restricted on the interval $[0, T]$) as an element of the following space:
\begin{equation}\label{def:D_T-functionspace}
D_T = \left\{f: [0, T] \to \R_+;\,\, f \,\,\text{right-continuous with left limits}\right\} .
\end{equation}
The Borel $\sigma$-field of $D_{T}$ will be denoted by $\mathcal{D}$. Then according to \cite[Proposition 4.2]{FV}, the tightness of $\{N^{\ep} \,; \,\ep > 0\}$ in the space $D_T$ stems from the following criterion.

\begin{proposition}\label{prop:criterion-for-tightness}
Let $X$ and $X_n$, $n \in \mathbb{N}$, be random variables in $(D_T, \mathcal{D})$. Suppose that\\
\quad (1) $\left(X_n (t_1), \ldots , X_n (t_k) \right) \xrightarrow{(\dd)} \left(X (t_1), \ldots , X(t_k) \right)$, for $k$ instants $t_1< \cdots <t_k$,\\
\quad (2) $X$ has jumps of size $\pm 1$, and\\
\quad (3) $X_n$ has integer-valued jumps.\\
Then the sequence $\left\{X_n : \, n\ge 1 \right\}$ is tight.
\end{proposition}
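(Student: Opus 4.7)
The plan is to invoke a standard tightness criterion in the Skorohod space $D_T$, which reduces tightness of $\{X_n\}$ to two ingredients: (a) marginal tightness of $\{X_n(t)\}_n$ for each $t \in [0,T]$, and (b) control of the c\`adl\`ag modulus of continuity $w''$, namely that for every $\eta > 0$,
\begin{equation*}
\lim_{\delta \to 0}\, \limsup_{n \to \infty}\, \PP\bigl( w''(X_n, \delta) \geq \eta \bigr) = 0 \,.
\end{equation*}
The three assumptions play complementary roles: (1) yields (a); (3) ensures that $w''(X_n, \delta)$ takes only non-negative integer values; and (2) will be used to rule out the ``hidden'' oscillations of $X_n$ that cannot be detected by finite-dimensional distributions alone.

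First I would establish marginal tightness. For each $t \in [0,T]$, condition (1) applied with $k = 1$ yields $X_n(t) \to X(t)$ in distribution, so $\{X_n(t)\}_n$ is tight in $\R$; this secures (a).

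For the modulus estimate (b), thanks to (3) the quantity $w''(X_n, \delta)$ takes values in $\{0, 1, 2, \ldots\}$, so it suffices to prove the bound for $\eta = 1$. The event $\{w''(X_n, \delta) \geq 1\}$ requires the existence of $t_1 < t < t_2$ in $[0,T]$ with $t_2 - t_1 \leq \delta$ and $|X_n(t) - X_n(t_1)| \geq 1$, $|X_n(t_2) - X_n(t)| \geq 1$; that is, $X_n$ makes two well-separated jumps of integer magnitude inside a window of width $\delta$. I would then partition $[0,T]$ into subintervals of length less than $\delta/2$, observe that the above event forces two consecutive subintervals to register a jump of magnitude $\geq 1$, and use the finite-dimensional convergence in (1) applied to the partition endpoints to transfer this event asymptotically to $X$. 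Condition (2) then ensures that $X$ -- whose jumps have size $\pm 1$ and whose jump times therefore form a locally finite subset of $[0,T]$ -- cannot exhibit such a configuration on a pair of shrinking adjacent subintervals except on a set of probability tending to zero with $\delta$.

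The main obstacle, as is typical in tightness proofs, is passing from pointwise (finite-dimensional) convergence to a pathwise oscillation bound. Finite-dimensional convergence alone does not prevent $X_n$ from executing a quick up--down pair of compensating jumps between two adjacent grid points. The key mechanism is to couple the integer structure of $X_n$'s jumps (condition (3)) with the rigidity of $X$'s unit jumps (condition (2)): any such spurious oscillation of $X_n$ costs at least two jumps of magnitude $\geq 1$, which can be localized to a pair of grid subintervals, and then finite-dimensional convergence on those subintervals forces matching activity of $X$ -- activity that $X$ cannot sustain with only $\pm 1$ jumps. Making this implication quantitative and uniform in $n$ is the technically demanding part of the argument.
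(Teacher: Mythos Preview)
The paper does not prove this proposition at all: it is quoted verbatim as \cite[Proposition~4.2]{FV} and then immediately applied. So there is nothing in the paper to compare your argument against; you are attempting to supply a proof that the authors deliberately outsourced.

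As for the sketch itself, there is a genuine gap precisely at the point you flag as ``technically demanding.'' Your proposed mechanism is: partition $[0,T]$ into cells of width $<\delta/2$, observe that $w''(X_n,\delta)\ge 1$ forces two integer jumps in adjacent cells, and then use finite-dimensional convergence at the grid points to transfer this activity to $X$. But the event $\{w''(X_n,\delta)\ge 1\}$ only requires $t_2-t_1\le\delta$, not $t_2-t_1\ge\delta/2$; all three times $t_1<t<t_2$ can lie in a \emph{single} grid cell. When that happens --- for instance when $X_n$ performs a unit up--down spike on an interval of width $o(1)$ --- the values of $X_n$ at the grid endpoints are unchanged, and finite-dimensional convergence on the grid sees nothing. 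Your sentence ``finite-dimensional convergence on those subintervals forces matching activity of $X$'' is therefore unjustified: there is no matching activity to force. To see the difficulty concretely, take $X$ to be a rate-$1$ Poisson process and set $X_n = X + \mathbbm{1}_{[U_n,\,U_n+1/n)}$ with $U_n$ uniform on $[0,T]$ and independent of $X$; conditions (1)--(3) all hold, yet $w''(X_n,\delta)\ge 1$ with probability tending to $1$ for every fixed $\delta>0$. This shows that the grid argument you outline cannot succeed as written, and suggests that either the cited result in \cite{FV} carries an additional hypothesis or its proof uses a genuinely different idea (not merely a refinement of the grid approach). Also, a minor point: the phrase ``activity that $X$ cannot sustain with only $\pm 1$ jumps'' is not quite the right reason --- $X$ certainly \emph{can} have two nearby $\pm 1$ jumps; what you need is that $X$ has only finitely many jumps on $[0,T]$, so two of them fall within $\delta$ with probability $o(1)$.
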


\noindent
Now we state a proposition about the tightness of $\{N^{\ep}; \ep > 0\}$ in the space $D_T$.

\begin{proposition}\label{Prop:tightness-Poi-N-ep}
Let $\{N^{\ep} \,;\, \ep > 0\}$ be the sequence of Poisson processes defined by \eqref{eq:N-t}, which belongs to the space $D_T$. Then, $\PP_Z$-almost surely $\{N^{\ep} \,;\, \ep > 0\}$ is tight.
\end{proposition}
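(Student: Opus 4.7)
The plan is to apply the tightness criterion stated in \cref{prop:criterion-for-tightness} (from \cite[Proposition 4.2]{FV}), with $X_n = N^{\ep_n}$ for an arbitrary sequence $\ep_n \to 0$ and limit process $X = N$ as defined in \eqref{def-eq:N-t}. To that end, I must verify the three conditions (1)--(3) of \cref{prop:criterion-for-tightness} hold $\PP_Z$-almost surely.

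Condition (1), the convergence of finite-dimensional distributions of $N^{\ep_n}$ to those of $N$, is exactly the content of \cref{Cor:convergence-N-ep}. This is the deepest ingredient and has already been established in the previous subsection, so it is essentially free here.

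For conditions (2) and (3), I exploit the explicit structure of the Poisson-based construction. Observe that by \eqref{eq:N-t}, the process $N^{\ep}$ jumps by $+1$ at each arrival time $\Gamma_k^{\ep}$ and by $-1$ at each departure time $\Gamma_k^{\ep}+L_k^{\ep}$, so its jumps are automatically integer-valued. To upgrade this to jumps of size exactly $\pm 1$ for the limit $N$, I would argue that $\PP_Z$-almost surely the countable collection of arrival and departure epochs has no double points. Indeed, conditionally on $Z$, the points $\{\Gamma_k\}$ form a Poisson process with intensity $\bar\psi \la(s)\,\dd s$, which is simple, and the service times $\{L_k\}$ are conditionally independent with continuous marginals $\nu(\Gamma_k,\dd r)$ (by \cref{hyp:F-bar-increments}, $\bar F_s$ is continuous in $r$). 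Hence for any pair $(i,j)$ the event $\{\Gamma_i = \Gamma_j + L_j\}$ has probability zero, and a countable union argument yields the claim; in particular $N$ has jumps of size $\pm 1$ a.s. The exact same reasoning applied to $N^{\ep}$ (whose arrival process is Poisson with intensity $\la(s)\psi(Z_{s/\ep})$ and whose service times have the same continuous laws) gives condition (3) with jumps of size $\pm 1$.

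With all three conditions verified, \cref{prop:criterion-for-tightness} yields the tightness of $\{N^{\ep_n}\}$ in $(D_T,\mathcal{D})$ along the chosen sequence. Since the sequence $\ep_n\to 0$ was arbitrary, we conclude that $\{N^{\ep};\,\ep>0\}$ is tight in $D_T$, $\PP_Z$-almost surely. The main (though mild) obstacle is the check of absence of coincident jump times; none of the steps requires additional heavy machinery beyond what has already been developed.
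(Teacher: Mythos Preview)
Your proposal is correct and follows essentially the same route as the paper: invoke \cref{Cor:convergence-N-ep} for condition~(1) of \cref{prop:criterion-for-tightness}, and observe that $N^{\ep}$ and $N$ have jumps of size $\pm 1$ for conditions~(2)--(3). The paper dispatches the jump-size verification in a single sentence (``intrinsically jumps of size $\pm 1$''), whereas you spell out the no-coincidence argument; note, however, that \cref{hyp:F-bar-increments} does not literally assert continuity of $\bar F_s$ in $r$, so your parenthetical justification is slightly overstated even if the conclusion is the intended one.
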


\begin{proof}
We observe that $N^{\ep} (t)$ defined by \eqref{eq:N-t} and $N (t)$ defined by \eqref{def-eq:N-t} 
have intrinsically jumps of size $\pm 1$. In the light of \cref{Cor:convergence-N-ep}, the tightness of the sequence $\{N^{\ep} \,;\, \ep > 0\}$ is a direct consequence of \cref{prop:criterion-for-tightness}.
\end{proof}

We now turn to our main result which is the following quenched limit theorem for the process $N^{\ep}$.

\begin{theorem}\label{thm:limit-N-ep-in-D_T}
Consider an arbitrary time horizon $T> 0$. We assume that \cref{hyp:ergodicity,hyp:arr-times-Gamma,hyp:F-bar-increments} are verified. Recall that the processes $N^{\ep}$ and $N$ are respectively defined by \eqref{eq:N-t} and \eqref{def-eq:N-t}, and the functional space $D_T$ is introduced in \eqref{def:D_T-functionspace}. Then as $\ep \to 0$, $\PP_Z$-almost surely the following limit in distribution holds true in $D_{T}$:
\begin{equation*}
\{N^{\ep} (t) : \, t \in [0, T]\} \xrightarrow{(\dd)} \{N(t) : \, t \in [0, T]\} .
\end{equation*}
\end{theorem}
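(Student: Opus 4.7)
The plan is to combine the two ingredients already established in the previous subsections: the convergence of finite-dimensional distributions furnished by \cref{Cor:convergence-N-ep}, and the tightness of the family $\{N^\ep; \ep > 0\}$ in $D_T$ furnished by \cref{Prop:tightness-Poi-N-ep}. Both hold $\PP_Z$-almost surely, so there is no additional measurability issue to deal with when patching them together under the quenched probability.

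First, I would invoke the classical characterization of weak convergence in the Skorokhod space $D_T$: a sequence $\{X_n\}$ of càdlàg processes converges in law to a càdlàg process $X$ as soon as (i) the sequence $\{X_n\}$ is tight in $D_T$, and (ii) its finite-dimensional distributions converge to those of $X$ on a dense subset $T_0 \subset [0,T]$ containing $T$. \cref{Cor:convergence-N-ep} gives the convergence in law of $(N^\ep(t_1), \ldots, N^\ep(t_n))$ to $(N(t_1), \ldots, N(t_n))$ for every finite collection $0 \leq t_1 < \cdots < t_n \leq T$, so condition (ii) holds in the strongest possible form. \cref{Prop:tightness-Poi-N-ep} directly provides (i). Combining the two, every weakly convergent subsequence of $\{N^\ep\}$ has the same limiting law, namely that of $N$; together with tightness, this forces $N^\ep \xrightarrow{(\dd)} N$ in $D_T$.

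The only technical subtlety, and what I view as the main point to verify carefully, is the continuity of the projections $x \mapsto (x(t_1), \ldots, x(t_n))$ at the limit process $N$, so that the finite-dimensional convergence from \cref{Cor:convergence-N-ep} genuinely matches the projected laws of a potential subsequential limit in the Skorokhod topology. For the limit $N$ defined by \eqref{def-eq:N-t}, the arrival times $\Gamma_k$ and the service times $L_k$ have absolutely continuous distributions (thanks to the continuity of $\lambda$, $\bar F_s$ and of the mean measure $\tilde\nu$ in \eqref{eqdef:nu-meanmeasure}), so that for every fixed $t \in [0,T]$ one has $\PP(N(t) = N(t-)) = 1$. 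Hence every deterministic $t$ is almost surely a continuity point of $N$, which is exactly the condition needed to lift the fidi convergence to a convergence in distribution on $D_T$.

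Putting these pieces together yields the statement. In short, the theorem is essentially a corollary of \cref{Cor:convergence-N-ep,Prop:tightness-Poi-N-ep} once one observes that the limiting process $N$ almost surely has no fixed discontinuities, and the only substantive work remaining is to make the last observation precise.
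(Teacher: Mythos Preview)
Your proposal is correct and follows the same approach as the paper, which simply states that the result follows from \cref{Cor:convergence-N-ep} and \cref{Prop:tightness-Poi-N-ep}. You are actually more careful than the paper in noting the technical point about fixed discontinuities of the limit process $N$, which the paper leaves implicit.
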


\begin{proof}
The convergence in law of $N^{\ep}$ follows from \cref{Cor:convergence-N-ep} and \cref{Prop:tightness-Poi-N-ep}.
\end{proof}

\subsection{Limiting Description of Associated Processes}
We now state some consequences of \cref{thm:limit-N-ep-in-D_T} that are of interest in practice. 

We first consider the total accumulated input on the interval $[0, t]$, defined by
\begin{equation}\label{eq:A-ep-input}
A^{\ep} (t) = \iot N^{\ep} (s) \dd s \,.
\end{equation}
The process $A^{\ep}$ is a continuous function on $[0,T]$, due to the fact that $N^{\ep} \in D_T$. In some real-world situations,
this continuous quantity models a stochastic fluid input to a queueing system. The limiting behavior of $A^{\ep}$ is summarized in the following proposition.

\begin{proposition}\label{thm:cont-arrival-limit}
Let $C_T$ be the space of continuous functions from $[0,T]$ to $\R$.
With the same assumptions of \cref{thm:limit-N-ep-in-D_T}, the input process $A^{\ep} (t)$ defined by \eqref{eq:A-ep-input} converges in distribution. More precisely, $\mathbb{P}_Z$-almost surely we have the following limit in law in the space $C_T$ as $\ep \to 0$,
\begin{equation*}
A^{\ep} (t) = \iot N^{\ep} (s) \dd s \, \xrightarrow{(\dd)} \, \iot N (s) \dd s \,,
\end{equation*}
where $N$ is the process given by \eqref{def-eq:N-t}.
\end{proposition}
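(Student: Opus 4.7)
The plan is to deduce the result from \cref{thm:limit-N-ep-in-D_T} via the continuous mapping theorem. Define the integration functional $\Phi: D_T \to C_T$ by $\Phi(f)(t) = \int_0^t f(s)\,\dd s$. Since every $f \in D_T$ is bounded on the compact interval $[0,T]$, $\Phi(f)$ is well-defined and Lipschitz in $t$, hence belongs to $C_T$. Thus $A^{\ep} = \Phi(N^{\ep})$ and, once I show $\Phi$ is continuous with respect to the Skorokhod topology on $D_T$ and the uniform topology on $C_T$, the proposition will follow immediately from \cref{thm:limit-N-ep-in-D_T} together with the continuous mapping theorem applied under $\PP_Z$.

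The main task is therefore to verify that $\Phi$ is continuous from $(D_T, J_1)$ into $(C_T, \|\cdot\|_\infty)$. Suppose $f_n \to f$ in the Skorokhod topology. By definition there exist continuous strictly increasing bijections $\lambda_n: [0,T] \to [0,T]$ with $\lambda_n(0)=0$ and $\lambda_n(T)=T$ such that $\sup_t|\lambda_n(t) - t| \to 0$ and $\sup_t|f_n(\lambda_n(t)) - f(t)| \to 0$. The second bound forces $\sup_n \|f_n\|_\infty =: C <\infty$, and at each continuity point of $f$ (there are all but countably many) one has $f_n(t) \to f(t)$. By the dominated convergence theorem, $\Phi(f_n)(t) \to \Phi(f)(t)$ pointwise on $[0,T]$. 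Moreover, each $\Phi(f_n)$ is Lipschitz with constant at most $C$, so the family $\{\Phi(f_n)\}$ is equicontinuous. Combined with pointwise convergence, this upgrades the convergence $\Phi(f_n) \to \Phi(f)$ to convergence in the uniform norm on $[0,T]$, which is exactly the continuity statement we need.

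Granted this continuity, \cref{thm:limit-N-ep-in-D_T} provides, $\PP_Z$-almost surely, $N^{\ep} \xrightarrow{(\dd)} N$ in $D_T$, and the continuous mapping theorem yields $\Phi(N^{\ep}) \xrightarrow{(\dd)} \Phi(N)$ in $C_T$, i.e. $A^{\ep} \xrightarrow{(\dd)} \int_0^\cdot N(s)\,\dd s$, which is the claim. The only delicate step is the continuity of the integration functional with respect to Skorokhod convergence, which is a classical fact but is nontrivial because Skorokhod convergence permits time-shifts of jump locations; the argument above handles it by exploiting the uniform boundedness guaranteed by Skorokhod convergence, together with dominated convergence and the uniform Lipschitz property of the primitives.
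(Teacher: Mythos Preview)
Your proof is correct and follows exactly the same approach as the paper: define the integration functional $\Phi:D_T\to C_T$, note its continuity, and apply the continuous mapping theorem to the convergence $N^{\ep}\xrightarrow{(\dd)} N$ from \cref{thm:limit-N-ep-in-D_T}. The paper simply asserts that continuity is ``readily checked,'' whereas you actually supply the verification (uniform boundedness from Skorokhod convergence, a.e.\ pointwise convergence of $f_n$, dominated convergence for the integrals, and equicontinuity of the primitives to upgrade to uniform convergence).
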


\begin{proof}
Let $\phi : D_T \to C_T$ be defined by
\begin{equation*}
\left[\phi (f)\right]_t = \iot f(s) \dd s \,, \quad \text{for} \quad f \in D_T \,.
\end{equation*}
It is readily checked that $\phi$ is a continuous function. 
Since $N^{\ep} \xrightarrow{(\dd)} N$ in $D_T$, we get that
\begin{equation*}
A^{\ep} = \phi (N^{\ep}) \, \xrightarrow{(\dd)} \, \phi (N) = A , \quad \text{in the space} \,\, C_T \,.
\end{equation*}
This completes our proof.
\end{proof}

{Another useful corollary of our main 
\cref{thm:limit-N-ep-in-D_T} is the following.}
The quantity $A^{\ep} (t)$ defined by \eqref{eq:A-ep-input} may be treated as a $G_t/G_t/\infty$-input for another single-server queue. {The state of the single server queue, $X^\ep(t)$,
satisfies the following storage equation driven by 
$dA^{\ep}(t) = N^{\ep}(t)dt$},
\begin{equation*}
\dd X^{\ep} (t) = N^{\ep} (t) \dd t - r \mathbbm{1}_{\{X^{\ep} (t) >0\}} \dd t \,, \quad X^{\ep} (0) = 0 \,,
\end{equation*}
where we assume that the server works at constant rate $r$.
Thanks to Skorohod's lemma 
on reflected processes (see e.g \cite[Theorem 6.1]{chen-yao}), the application $N^{\ep} \to X^{\ep}$ is continuous from $D_T$ to $C_T$. Therefore, we obtain a limiting behavior for $X^{\ep}$ as follows:

\begin{proposition}
Let the assumptions of \cref{thm:limit-N-ep-in-D_T} prevail. Then $\PP_Z$-almost surely, $X^{\ep}$ converges in distribution to a process $X$, such that $X$ solves the following equation driven by $N$,
\begin{equation*}
\dd X(t) = N(t) \dd t - r \mathbbm{1}_{\{X(t) > 0\}} \dd t ,
\end{equation*}
with the initial condition $X(0) = 0$.
\end{proposition}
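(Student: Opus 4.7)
The plan is to view $X^\ep$ as the image of $N^\ep$ under a continuous map from $D_T$ into $C_T$, and then invoke the continuous mapping theorem together with \cref{thm:limit-N-ep-in-D_T}. This parallels the strategy used in the proof of \cref{thm:cont-arrival-limit}, with the additional ingredient of the Skorohod reflection.

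First I would rewrite the storage equation in integrated form. Setting $Y^\ep(t) := A^\ep(t) - rt = \iot N^\ep(s)\,\dd s - rt$, the solution $X^\ep$ is the nonnegative process such that $X^\ep(t) = Y^\ep(t) + L^\ep(t)$, where $L^\ep$ is nondecreasing with $L^\ep(0)=0$ and $L^\ep$ increases only on $\{X^\ep = 0\}$. By Skorohod's reflection lemma (see \cite[Theorem 6.1]{chen-yao}), this representation is equivalent to the explicit formula
\begin{equation*}
X^\ep(t) = Y^\ep(t) + \sup_{0 \leq s \leq t}\bigl(-Y^\ep(s)\bigr)^+ =: \Phi(Y^\ep)(t),
\end{equation*}
and the reflection operator $\Phi$ is (Lipschitz-)continuous on $C_T$ endowed with the uniform norm.

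Next I would assemble the full functional. Define $\Psi: D_T \to C_T$ by $\Psi(f) := \Phi(\phi_r(f))$, where $\phi_r(f)(t) := \iot f(u)\,\dd u - rt$. The integration step $f \mapsto \iot f(u)\,\dd u$ is continuous from $D_T$ to $C_T$ (this is precisely the argument used in \cref{thm:cont-arrival-limit}), subtraction of the linear function $t \mapsto rt$ preserves continuity, and $\Phi$ is continuous by the previous step. Hence $\Psi$ is continuous, and by construction $X^\ep = \Psi(N^\ep)$.

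Finally, since \cref{thm:limit-N-ep-in-D_T} gives $N^\ep \xrightarrow{(\dd)} N$ in $D_T$ $\PP_Z$-almost surely, the continuous mapping theorem yields $X^\ep = \Psi(N^\ep) \xrightarrow{(\dd)} \Psi(N)$ in $C_T$. Setting $X := \Psi(N)$, the same Skorohod lemma shows that $X$ satisfies the claimed reflected equation driven by $N$ with $X(0) = 0$, which concludes the argument. The only point requiring any care is the continuity of $\Psi$ with respect to the Skorohod $J_1$-topology on $D_T$; this causes no real difficulty, since the integration map already smooths jumps into a continuous output and the reflection map is continuous on $C_T$.
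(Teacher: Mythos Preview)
Your proof is correct and follows exactly the approach the paper takes: the paper simply notes that by Skorohod's lemma on reflected processes (\cite[Theorem 6.1]{chen-yao}) the map $N^{\ep} \mapsto X^{\ep}$ is continuous from $D_T$ to $C_T$, and then concludes by the continuous mapping theorem applied to \cref{thm:limit-N-ep-in-D_T}. You have simply spelled out this continuity in more detail by factoring the map through $\phi_r$ and the reflection operator $\Phi$, which is fine.
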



%
%

\section{{Generalizations and Conclusions}}\label{sect:generalization}
We now demonstrate, in Section~\ref{sec:ext}, that our main result in Theorem~\ref{thm:limit-N-ep-in-D_T} can be extended to more general stochastic intensity models that do not have the product form assumed before. We end in Section~\ref{sec:rem} with a summary of the paper and comments on future research directions.

\subsection{Extension}~\label{sec:ext}
Thus far we have assumed that the stochastic intensity model is of product form: $\la (s) \psi (Z_{s/ \ep})$. Our analysis can be applied to a more
general of the stochastic intensity $\Psi : [0,\infty) \times \R^d \to \R_+$, where $\Psi$ is positive, 
and for simplicity assumed to be bounded and uniformly continuous in both variables.
The key to obtaining our limiting results, therefore, lies in the analysis of
\begin{equation}\label{eq:lim-gen-intPsi}
\lim_{\ep \to 0} \frac{1}{t} \iot \Psi (s, Z_{s/\ep}) \dd s .
\end{equation}
{The following argument shows how the method presented in the 
previous sections can be replicated in this general case.}

Consider the Riemann sum of $\Psi$ over $[0, t]$ with uniform partition $0 = t_0 < t_1 < \cdots < t_n = t$; i.e., $t_i = it/n$, for $i = 0, 1, \cdots, n$. We rewrite the integral in \eqref{eq:lim-gen-intPsi} as
\begin{equation*}
\iot \Psi (s, Z_{s/\ep}) \dd s = \sum_{i = 0}^{n-1} \int_{t_i}^{t_{i+1}} \Psi (s, Z_{s/\ep}) \dd s.
\end{equation*}
By the uniform continuity of $\Psi$ in the first variable, we have
as $n \to \infty$ that
\begin{equation*}
\lim_{n\rightarrow \infty}\Bigg| \sum_{i = 0}^{n-1} \int_{t_i}^{t_{i+1}} \Big(\Psi (s, Z_{s/\ep}) - \Psi (t_i, Z_{s/\ep}) \Big) \dd s \Bigg| = 0.
\end{equation*}
{The above procedure essentially 
freezes the time variable $s$ to the discrete epochs $t_i$'s.}
\noindent
Hence, the conclusion that the limit in \eqref{eq:lim-gen-intPsi} exists and is finite results from the following computation:
\begin{eqnarray*}
\lim_{\ep\to 0}\int_{t_i}^{t_{i+1}} \Psi (t_i, Z_{s/\ep}) \dd s 
&=& \lim_{\ep\to 0}(t_{i+1} - t_i)
\frac{1}{t_{i+1} - t_i}\int_{t_i}^{t_{i+1}} \Psi (t_i, Z_{s/\ep}) \dd s
\notag \\
&=& \lim_{\ep \to 0} 
(t_{i+1} - t_i)
\frac{1}{\frac{t_{i+1} - t_i}{\ep}}
\int_{\frac{t_i}{\ep}}^{\frac{t_{i+1}}{\ep}} \Psi (t_i, Z_{s}) \dd s\\
&=& (t_{i+1} - t_i) \E \big[\Psi (t_i, \bar{Z}) \big]
\end{eqnarray*}
where $\bar{Z}$ is the stationary solution corresponding to the process $Z$
and the limit holds $\mathbb{P}_Z$-almost surely.
Hence the limit in \eqref{eq:lim-gen-intPsi} can be consequently written as
\begin{equation*}
\lim_{\ep \to 0} \iot \Psi (s, Z_{s/\ep}) \dd s
= \iot \E \big[ \Psi (s, \bar{Z}) \big] \dd s 
\equiv \int_0^t \left(\int \Psi(s,y) \pi(\dd y) \right) \dd s
\end{equation*}
where $\pi$ is the stationary measure for $Z_\cdot$.

It is then straightforward to show that the following analogue of ~\cref{prop:m-t-of-N-t} holds.
\begin{proposition}~\label{prop:m-t-of-N-t-2}
Let $M^\ep$ and $\{N^{\ep} (t): t \geq 0\}$ be defined
by \eqref{eq:M-sum} and \eqref{eq:N-t}, respectively. Then under the quenched probability 
$\PP_Z$,
$M^\ep$ is a Poisson random measure with mean measure given by
\begin{equation*}
\tilde{\nu}^{\ep} (\dd x, \dd y) = \nu (x, \dd y) \mu^{\ep} (\dd x) \,,
\quad \text{with} \quad
\mu^{\ep} (\dd s) = \Psi (s,Z_{s/\ep}) \dd s \,,
\end{equation*}
where we recall that $\nu$ is introduced in \eqref{eq:def-bar-F}. Furthermore,  we have that for any $t > 0$, 
$N^{\ep} (t)$ is a Poisson random variable with parameter
\begin{equation*}
m^{\ep} (t) = \int_{\{(x,y): x<t< x+y\}} \nu (x, \dd y) \mu^{\ep} (\dd x) = \int_0^t \Psi(s,Z_{s/\epsilon}) \bar F_s(t-s) \dd s.\end{equation*}
\end{proposition}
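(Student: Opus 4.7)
The plan is to replicate the proof of \cref{prop:m-t-of-N-t} essentially verbatim, since the only structural change is that the product intensity $\lambda(s)\psi(Z_{s/\ep})$ is replaced by the more general $\Psi(s,Z_{s/\ep})$. Because $\Psi$ is assumed bounded (and positive, continuous), the map $s \mapsto \Psi(s,Z_{s/\ep})$ is locally integrable $\PP_Z$-almost surely, so the non-homogeneous Poisson process of arrival times $\{\Gamma_k^\ep\}$ with intensity $\Psi(s,Z_{s/\ep})$ is well defined under the quenched probability.

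First, I would invoke \cite[Proposition 2.2]{Re} exactly as in the original proof: under $\PP_Z$, the arrival process $\{\Gamma_k^\ep\}$ is a Poisson process with intensity measure $\mu^\ep(\dd s) = \Psi(s,Z_{s/\ep})\,\dd s$, and the service times $L_k^\ep$ are conditionally independent with law $\nu(\Gamma_k^\ep,\dd r)$. This yields that $M^\ep = \sum_{k\ge 1}\delta_{(\Gamma_k^\ep,L_k^\ep)}$ is a Poisson random measure under $\PP_Z$ with mean measure
\begin{equation*}
\tilde{\nu}^\ep(\dd x,\dd y) = \nu(x,\dd y)\,\mu^\ep(\dd x) = \nu(x,\dd y)\,\Psi(x,Z_{x/\ep})\,\dd x.
\end{equation*}

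Next, I would apply \cite[Chapter VI, Theorem 2.9]{Cin} to obtain the quenched Laplace transform
\begin{equation*}
\E_Z\!\left[e^{-M^\ep f}\right] = \exp\!\left\{-\tilde{\nu}^\ep(1-e^{-f})\right\}
\end{equation*}
for every positive measurable $f$. Specializing to $f(x,y) = \xi\,\mathbbm{1}_{\{x<t<x+y\}}$ for $\xi > 0$ and recalling from \eqref{eq:N-t} that $N^\ep(t) = M^\ep(\mathbbm{1}_{\{x<t<x+y\}})$, we get
\begin{equation*}
\E_Z\!\left[e^{-\xi N^\ep(t)}\right] = \exp\!\left\{-(1-e^{-\xi})\,m^\ep(t)\right\},
\end{equation*}
which identifies $N^\ep(t)$ as a Poisson random variable under $\PP_Z$ with parameter $m^\ep(t) = \tilde{\nu}^\ep(\{(x,y):x<t<x+y\})$.

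Finally, Fubini's theorem together with the definition \eqref{eq:def-bar-F} of $\bar{F}_s$ yields
\begin{equation*}
m^\ep(t) = \int_0^t \Psi(s,Z_{s/\ep}) \int_{t-s}^\infty \nu(s,\dd r)\,\dd s = \int_0^t \Psi(s,Z_{s/\ep})\,\bar{F}_s(t-s)\,\dd s,
\end{equation*}
which is the announced formula. There is no genuine obstacle here: the argument is a direct translation of the proof of \cref{prop:m-t-of-N-t}, and the only point that must be checked is the local integrability of the new intensity, which is trivial from the boundedness assumption on $\Psi$.
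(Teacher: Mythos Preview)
Your proposal is correct and follows exactly the approach the paper intends: the paper does not give a separate proof of this proposition but simply declares it ``straightforward'' as an analogue of \cref{prop:m-t-of-N-t}, and your argument is precisely the line-by-line translation of that proof with $\lambda(s)\psi(Z_{s/\ep})$ replaced by $\Psi(s,Z_{s/\ep})$. The only additional care you take---verifying local integrability of the new intensity via boundedness of $\Psi$---is the one point that needs checking, and you handle it correctly.
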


Consequently, analogous to Theorem~\ref{Thm:limit-m-ep}, we claim that:

\begin{theorem}~\label{thm:limit-N-ep-in-D_T-2}
	Let the assumptions of ~\cref{prop:m-t-of-N-t-2} prevail. We also suppose that ~\cref{hyp:ergodicity,hyp:F-bar-increments} hold without change, while ~\cref{hyp:arr-times-Gamma} holds with the intensity $\{\Psi(s,Z_{s/\epsilon}); s \geq 0\}$. Then under the quenched probability $\mathbb P_Z$, for any $t > 0$ we have almost surely
	\begin{align*}
		\lim_{\ep \to 0} m^{\ep} (t) = \bar m(t),
	\end{align*}
	where 
	\begin{align*}
		\bar m(t) = \int_0^t \mathbb E[\Psi(s,\bar Z)] \bar F_s(t-s) \dd s.
	\end{align*}
\end{theorem}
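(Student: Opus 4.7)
The plan is to adapt the Riemann-partition strategy sketched just above the statement to the full integrand in Proposition \ref{prop:m-t-of-N-t-2}. Writing
$$m^\ep(t) = \int_0^t \tilde\Psi(s, Z_{s/\ep})\, \dd s, \qquad \tilde\Psi(s, z) := \Psi(s, z)\, \bar F_s(t-s),$$
with $t$ fixed, the reduction is to show that under $\PP_Z$, almost surely
$$\lim_{\ep\to 0} \int_0^t \tilde\Psi(s, Z_{s/\ep})\,\dd s = \int_0^t \E\bigl[\tilde\Psi(s,\bar Z)\bigr]\,\dd s = \bar m(t).$$
Under the standing regularity (uniform continuity and boundedness of $\Psi$ in $s$, together with the Lipschitz-in-$s$ bound on $\bar F_s$ from \cref{hyp:F-bar-increments}, plus the continuity in $r$ of $\bar F_s(r)$ tacitly assumed in the original analysis of \cref{Thm:limit-m-ep}), $\tilde\Psi(\cdot, z)$ admits a modulus of continuity $\omega$ that is uniform in $z$.

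For any $n\ge 1$, set $t_i = it/n$ and split
\begin{equation*}
m^\ep(t) \;=\; \underbrace{\sum_{i=0}^{n-1}\int_{t_i}^{t_{i+1}}\!\!\bigl[\tilde\Psi(s, Z_{s/\ep}) - \tilde\Psi(t_i, Z_{s/\ep})\bigr]\dd s}_{E^\ep_n} \;+\; \underbrace{\sum_{i=0}^{n-1}\int_{t_i}^{t_{i+1}}\!\!\tilde\Psi(t_i, Z_{s/\ep})\dd s}_{S^\ep_n}.
\end{equation*}
The uniform modulus of continuity yields the $\ep$-uniform bound $|E^\ep_n|\le t\,\omega(t/n)$, which vanishes as $n\to\infty$. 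For fixed $n$ and each $i$, apply \cref{hyp:ergodicity} to the bounded function $z\mapsto \tilde\Psi(t_i,z)$; the change of variable $u = s/\ep$ followed by the decomposition $\int_{t_i/\ep}^{t_{i+1}/\ep}(\cdots)\dd u = \int_0^{t_{i+1}/\ep}(\cdots)\dd u - \int_0^{t_i/\ep}(\cdots)\dd u$ reduces the shifted Ces\`aro average to the one starting at $0$, and gives
$$\lim_{\ep\to 0}\int_{t_i}^{t_{i+1}}\tilde\Psi(t_i, Z_{s/\ep})\,\dd s = (t_{i+1}-t_i)\,\E\bigl[\tilde\Psi(t_i,\bar Z)\bigr],\qquad\PP_Z\text{-a.s.,}$$
exactly as in the authors' sketch. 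Summing the finitely many a.s.\ events, $S^\ep_n$ converges a.s.\ to the Riemann sum $\sum_i(t_{i+1}-t_i)\E[\tilde\Psi(t_i,\bar Z)]$, which converges to $\bar m(t)$ as $n\to\infty$ by continuity of $s\mapsto \E[\tilde\Psi(s,\bar Z)]$.

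The main (and only non-trivial) obstacle is the interchange of the two limits $\ep\to 0$ and $n\to\infty$: the ergodic convergence of $S^\ep_n$ is valid only for fixed $n$, while the smallness of $E^\ep_n$ requires $n$ large. This is resolved by a standard three-$\delta$ argument: given $\delta>0$, choose $n$ large enough that both $t\,\omega(t/n)<\delta$ and $|\sum_i(t_{i+1}-t_i)\E[\tilde\Psi(t_i,\bar Z)] - \bar m(t)|<\delta$, then for that fixed $n$ choose $\ep$ small enough that $|S^\ep_n - \sum_i(t_{i+1}-t_i)\E[\tilde\Psi(t_i,\bar Z)]|<\delta$, yielding $|m^\ep(t)-\bar m(t)|<3\delta$. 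A minor bookkeeping point is that the $\PP_Z$-null exceptional set in \cref{hyp:ergodicity} depends on the test function; however, the countable family $\{\tilde\Psi(t_i,\cdot): n\ge 1,\, 0\le i\le n-1\}$ produces only a countable intersection of full-measure sets, which still has full $\PP_Z$-measure.
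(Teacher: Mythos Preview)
Your proposal is correct and follows essentially the same Riemann-partition/freezing argument that the paper sketches in Section~\ref{sec:ext} just prior to the statement; you have simply absorbed the factor $\bar F_s(t-s)$ into $\tilde\Psi$ and carried out the same steps on the full integrand. The paper leaves the interchange of the limits $\ep\to0$ and $n\to\infty$ and the countability of the exceptional sets implicit, whereas you spell these out with the three-$\delta$ argument and the countable-intersection remark; these are exactly the details needed to make the sketch rigorous.
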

The asymptotic convergence result in ~\cref{thm:limit-N-ep-in-D_T}, therefore, can be generalized so that the corresponding limit process $\{N(t) : t \in [0,T]\}$ is a Poisson point process with mean intensity $\bar{m} (t)$. More importantly, observe that yet again, there is a time-scale separation in the homogenization limit.





\subsection{Conclusion and Perspectives}~\label{sec:rem}
Our primary results in ~\cref{thm:limit-N-ep-in-D_T,thm:limit-N-ep-in-D_T-2} demonstrate that the state of a $G_t/G_t/\infty$ queue in a random fast oscillatory environment is closely approximated by that of a $M_t/G_t/\infty$ queue with nonhomogeneous Poisson traffic, in the homogenization limit. More precisely, the rapid fluctuations of the stochastic intensity are averaged out in the limit. These results are very useful for performance analysis since much is known about the properties of the $M_t/G/\infty$ queue, which is also a much simpler object to simulate. Our model assumes a two time-scale structure, wherein time-of-day effects in the traffic intensity are modeled by a smoothly varying function, and stochastic fluctuations are modeled by a strongly ergodic stochastic process. We also assume a very general model of time-varying service wherein the service time distribution itself depends on the arrival epoch. A crucial insight that emerges from our analysis is the fact that we do not require exponential ergodicity of the stochastic environment, though we do require a strong sense of ergodicity to be satisfied. Furthermore, our analysis permits both light- and heavy-tailed service time distributions. Therefore, for a rather broad range of infinite server queueing models, the system state is well approximated by a much simpler $M_t/G/\infty$ queue.  

Of course, these insights are greatly facilitated by the fact that we study an infinite server queue, allowing us to leverage the properties of Poisson point processes. In this setting we anticipate proving a functional central limit theorem (FCLT) for the system state in the homogenization limit, complementing ~\cref{thm:limit-N-ep-in-D_T,thm:limit-N-ep-in-D_T-2} with a rate of convergence. In a stationary setting where the arrival intensity $\lambda(\cdot)$ is a constant, it is well known that a FCLT holds and that the approximating process is O-U; see \cite[Section 3]{HvLM}. In our setting, however this analysis is complicated by the fact that the centering is by a time-varying function $\bar m(\cdot)$, and the analysis appears to require some further technical development, that is outside the scope of this paper. Second, while our results are in the homogenization limit as $\epsilon \to 0$, it would be interesting to consider the large time behavior of the process $N^\epsilon$ for a fixed $\epsilon > 0$, when $\lambda(\cdot)$ is a constant. Given the more general setting we are studying, this type of result would expand on the results in~\cite[Section 3]{HvLM}. It should also be noted that in~\cite{HvLM} the traffic model is a special stationary DSPP where the stochastic intensity process is constructed by sampling an i.i.d. stochastic process. That paper establishes a rather interesting ``trichotomy'' result, in particular showing that if the sampling is ``rapid'' then the traffic process is Poisson-like, reflecting an averaging effect; on the other hand, they also find that if the sampling is ``slow,'' then the over-dispersed nature of the DSPP is maintained in the limit, and consequently the limit process is {\it not} a $M_t/G/\infty$ queue. The small $\epsilon$ setting in this paper is, in a sense, a more general ``rapid'' sampling procedure. One of the surprises of our results is the fact that we are able to recover the Poisson-like structure in the limit, even with heavy-tailed service and polynomial ergodicity of the underlying stochastic intensity. {We do not, however, have a result that parallels the ``slow'' sampling result in~\cite{HvLM}. This suggests that there are regimes where time-scale separation between the time-of-day effects and the stochastic intensity are not manifested in the limit.} This appears to require a more refined CLT-type analysis. We will address this interesting phenomenon in a future paper.

\bibliographystyle{plain}
\bibliography{references}

\end{document}